\theoremstyle{plain}
\newtheorem{theorem}{Theorem}[section]
\newtheorem{proposition}[theorem]{Proposition}
\newtheorem{lemma}[theorem]{Lemma}
\newtheorem{corollary}[theorem]{Corollary}
\theoremstyle{definition}
\newtheorem{definition}[theorem]{Definition}
\newtheorem{question}[theorem]{Question}
\newtheorem{remark}[theorem]{\textup{Remark}} 
\newtheorem{example}[theorem]{\textit{Example}} 
\numberwithin{equation}{section}
\DeclareMathOperator{\stc}{\xrightarrow[]{st}}
\DeclareMathOperator{\nc}{\xrightarrow[]{\rVert\cdot\rVert}}
\begin{document}
\title[]%
{Introducing Statistical Operators: Boundedness, Continuity, and Compactness}
\author[E. Bayram, M. Küçükaslan, M. Et, A. Ayd\i n]
{Erdal Bayram$^{1}$ \and Mehmet Küçükaslan$^2$ \and Mikail Et$^3$ \and Abdullah Ayd\i n$^{4,*}$}

\newcommand{\acr}{\newline\indent}

\address{\llap{1\,}
	Department of Mathematics\acr
	Tekirdağ Nam\i k Kemal University\acr
	Tekirdağ, TURKEY}
\email{ebayram@nku.edu.tr}

\address{\llap{2\,}Department of Mathematics\acr
	Mersin University\acr
	Mersin, TURKEY}
\email{mkkaslan@gmail.com}

\address{\llap{3\,}Department of Mathematics\acr
	F\i rat University\acr
	Elaz\i\c{g}, TURKEY}
\email{met@firat.edu.tr }

\address{\llap{4\,} Department of Mathematics\acr
	Mu\c{s} Alparslan University\acr
	Mu\c{s}, TURKEY}
\email{$^*$Correspondence:a.aydin@alparslan.edu.tr}

\subjclass[2020]{47B10, 46B20, 40A05} 
\keywords{Statistical convergence, statistical bounded operator, statistical continuous operator, statistical compact operator}
\begin{abstract}
Many studies have been conducted on statistical convergence, and it remains an area of active research. Since its introduction, statistical convergence has found applications many fields. Nevertheless, there is a shortage of research related to operator theory. As far as we know, no studies have focused on continuous, bounded, and compact operators, which are fundamental concepts in mathematics. We explore the notions of statistical boundedness, continuity, and compactness of operators between normed spaces, establishing connections between these concepts and their counterparts in traditional normed space theory. Additionally, we provide examples and results that demonstrate the behavior and implications of statistical convergence in the context of operators.
\end{abstract}
	
\maketitle
\section{Introduction and Preliminaries}\label{Sec:1}
The boundedness, continuity and compactness of operators are crucial in understanding the principles of functional analysis theory, and also theses concepts constitute fundamental research in many fields of mathematical analysis. These notions are often associated with convergences that are either topological or non-topological because many deep properties are often expressed in terms of convergent nets and sequences. Different types of convergence are distinguished on the basis of the underlying mathematical structure, and comparisons between convergence types defined on the same structure can be performed. Statistical convergence is one type of convergence that has received a lot of attention lately. Statistical convergence emerged for real sequences as an extension of the conventional topological convergence, firstly introduced by by Steinhaus \cite{steinhaus} and by Fast \cite{fast} separately, and then generalised by Schoenberg \cite{schoenberg}. Salat \cite{salat}, Fridy \cite{fridy}, and Connor \cite{connor} have made substantial contributions by clarifying important characteristics of statistical convergence for real sequences. Furthermore, Maddox \cite{Maddox} provides evidence of applications that illustrate its practical utility, and also Fridy and Orhan obtain main results of statistical limit superior and limit inferior\cite{FO}. Tripathy \cite{Tripathy}, Alt\i nok et al. \cite{Altinok}, Bhardwaj and Gupta \cite{BG}, and Temizsu and Et \cite{TE} obtain some generalizations and properties of statistical bounded sequences. Ayd\i n introduce some results of statistical order convergence on Riesz spaces by order convergence  \cite{Aydn,AE}, Akbaş and Işık give some results in probability \cite{AI}, Baliarsingh et al. get results on deferred summable functions \cite{Et}, and Yapali and Polat get results on fuzzy normed spaces \cite{Reha}. Despite the considerable attention statistical convergence has received from numerous authors since its inception, leading to numerous applications and generalizations of the concept, there are a few studies concerning the concept of operators directly related to statistical convergence. Specifically, as far as we know, there is no any work about statistical continuity, boundedness, and compactness of operators. Hence, in the current paper, our goal is to present the concept of operators associated with statistical convergence.

Before continuing with the presentation of results, it would be advantageous for the reader to revisit the definitions and terminology employed in this study. Let us now remind some fundamental properties of concepts associated with statistical convergence. The \textit{natural density} (the main tool of statistical convergence) of a subset $K\subseteq\mathbb{N}$ is defined as follows:
$$
\delta(K) := \lim_{n \rightarrow \infty} \frac{1}{n} \left\vert \{ k \leq n : k \in K \} \right\vert
$$
Note that the vertical bars represent the cardinality of the specified set in the definition of statistical convergence. For further exposition on the natural density of sets, we refer the reader to \cite{fast, fridy}. Similarly, a sequence $(x_n)$ in a normed space $\left( U,\|\cdot\|\right) $ is called \textit{statistically convergent} to $x\in U$ if we have the existence of the following limit
$$
\lim_{n \rightarrow \infty} \frac{1}{n}\left\vert\{k\leq n:\|x_{k}-x\|\geq\varepsilon\}\right\vert=0
$$
for each $\varepsilon > 0$. It is well known that the statistical limit of a sequence is unique on normed spaces whenever it is exits. On the other hand, to simplify matters, we establish the following convention: if $(x_n)$ is a sequence that adheres to property $Q$ for all $n$ except for a negligible subset with a natural density of zero, then we denote that $(x_n)$ satisfies $Q$ for almost all $n$, abbreviated as a.a. $n$.

The organization of this paper is outlined as follows: Section $2$ presents an introduction to various concepts of statistical bounded maps between normed spaces, exploring their relationships and connections with other established operators. Section $3$ establishes the definition of statistical continuous maps through the utilization of statistical convergence on normed spaces. Section $4$ elaborates on the methodology for defining statistical compact operators. In Section $5$, we present the concept of statistical complete normed spaces, defining the notion of a statistical Banach space.

\section{Statistical bounded operators}\label{Sec:2}
The concept of statistical boundedness formally introduced by Fridy and Orhan \cite{FO} as follows: a sequence of real numbers, represented as $x := (x_n)$, is considered statistically bounded if we have $\delta\left(\{k : |x_k| > M\}\right)=0$ for some positive numbers $M$. It is established that statistical convergent and statistical Cauchy sequences are statistically bounded. However, it is remarkable that a statistically bounded sequence may not necessarily be statistically convergent or statistical Cauchy; this is demonstrated in, for instance, Example 2.10 \cite{BB}. In a similar vein, statistically bounded sequence in normed spaces is defined as follows:
\begin{definition}
	A sequence $x := (x_n)$ in a normed space $U$ is considered {\em statistically norm bounded} if there exists a positive real number $M > 0$ such that the expression
	$$
	\delta\left(\left\{n \in \mathbb{N} : \left\Vert x_{n} \right\Vert > M \right\}\right) = 0
	$$
	holds true.
\end{definition}

Alternatively, a sequence $x:= (x_n)$ is considered statistically bounded if $\|x_n\| \leq M$ for almost all $n$ satisfies for some positive real numbers $M > 0$ such that  (cf. \cite{Altinok,BG,TE,Tripathy}). It has been established that any statistical Cauchy sequence is statistically bounded (cf. \cite[Thm.8]{Altinok}). However, it is important to note that a statistically bounded sequence may not necessarily be statistically convergent or statistical Cauchy. Additionally, it has been shown in Example 5 \cite{Altinok} that a statistically bounded sequence may not be bounded in norm. Lemma 2.8 and Corollary 2.9 \cite{BB} indicate that weakly statistically Cauchy and weakly statistically convergent sequences in a normed space are statistically bounded; however, the converse does not hold in general. Throughout this paper, the set $\ell_{\infty}(U)$ represents all norm-bounded sequences in the normed space $U$, while the set containing all statistically bounded sequences in $U$ is symbolized by $\ell_{\infty}^{st}(U)$. It is clear that $\ell_{\infty}(U) \subset \ell_{\infty}^{st}(U)$.

Recall that a sequence $x=(x_n)$ is said to be {\em weakly statistical convergent} in a normed space $U$ if $(f(x_n))$ is statistical convergent in $\mathbb{R}$ for all $f\in U'$; see \cite[Def.1.1(ii)]{BB}, and also it is called {\em weakly statistical bounded} in $U$ if $(f(x_n))$ is statistical bounded in $\mathbb{R}$ for all $f\in U'$, where $U'$ denotes the norm dual of $U$.
\begin{theorem}\label{weak and st bounded coincide}
	The concepts of weakly statistically bounded and statistically bounded sequences in normed spaces are equivalent.
\end{theorem}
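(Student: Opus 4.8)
The statement asserts an equivalence, so I would prove the two implications separately. The implication \emph{statistically bounded} $\Rightarrow$ \emph{weakly statistically bounded} is immediate: if $\|x_n\|\le M$ for a.a.\ $n$, then for every $f\in U'$ we have $|f(x_n)|\le\|f\|\,\|x_n\|\le M\|f\|$ for a.a.\ $n$, so $(f(x_n))$ is statistically bounded in $\mathbb{R}$. The whole content of the theorem lies in the converse, which is a statistical analogue of the classical fact that a weakly bounded set is norm bounded; I therefore expect the uniform boundedness principle (equivalently, the Hahn--Banach theorem used to produce norming functionals) to be the engine of the argument.

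For the converse I would argue by contraposition. First I would record the convenient reformulation that $(x_n)$ is statistically bounded if and only if there is a set $K\subseteq\mathbb{N}$ with $\delta(K)=1$ on which $(\|x_n\|)_{n\in K}$ is bounded; failure of statistical boundedness then means precisely that for every $M>0$ the set $\{n:\|x_n\|>M\}$ has positive upper density $\limsup_n \tfrac{1}{n}|\{k\le n:\|x_k\|>M\}|>0$. Assuming $(x_n)$ is not statistically bounded, the goal is to produce a single functional $f\in U'$ for which $(f(x_n))$ is not statistically bounded, contradicting weak statistical boundedness. Passing to the separable subspace $\overline{\operatorname{span}}\{x_n\}$, I would use Hahn--Banach to choose norming functionals $g_n$ with $\|g_n\|=1$ and $g_n(x_n)=\|x_n\|$, and then assemble the desired $f$ as a suitably weighted series $\sum_k c_k g_{n_k}$ along an index sequence $(n_k)$ drawn from the positive-upper-density sets $\{\|x_n\|>M\}$ with $M\to\infty$.

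The hard part is exactly this construction. Two difficulties must be reconciled: the exceptional (density-zero) set in the definition of weak statistical boundedness depends on the functional, so one cannot simply intersect controls over $U'$ (which may be uncountable); and an ordinary subsequence $(n_k)$ typically has density zero, so unboundedness merely along $(n_k)$ carries no statistical information. The construction must therefore select the humps so that the blocks on which $|f(x_n)|$ is large accumulate to positive upper density, while controlling the cross terms $g_{n_j}(x_{n_k})$ and the tails $\|\sum_{k>K}c_k g_{n_k}\|$ so as to keep $f$ bounded and to force $|f(x_{n_k})|\to\infty$ on a thick set. A Baire-category argument on $U'$ (writing $U'=\bigcup_k\{f:\delta(\{n:|f(x_n)|>k\})=0\}$) yields a uniform \emph{directional} estimate $|f(x_n)|\le C\|f\|$ for a.a.\ $n$ for each fixed $f$, but upgrading this to the norm estimate $\|x_n\|\le C$ for a.a.\ $n$ meets the same dependence-on-$f$ obstruction; bridging directional control to norm statistical boundedness is where the real work—and the gliding-hump selection above—is needed.
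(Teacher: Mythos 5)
Your first implication (statistically bounded $\Rightarrow$ weakly statistically bounded) is correct and coincides with the second half of the paper's own proof. The genuine gap is in the converse: your proposal describes a strategy (norming functionals, a weighted gliding-hump series, control of cross terms) but explicitly leaves the key construction unexecuted, so it is a plan rather than a proof. Moreover, the gap cannot be filled, because the implication you are attempting is false. Take $U=\ell_2$ with orthonormal basis $(e_k)$ and set $x_n=n^{1/4}e_{n/2}$ for $n$ even, $x_n=\theta$ for $n$ odd. For $f=(a_k)\in \ell_2'=\ell_2$ and even $n=2k\le N$, the inequality $|f(x_n)|=(2k)^{1/4}|a_k|>1$ forces $a_k^{2}>N^{-1/2}$, and at most $\|f\|^{2}\sqrt{N}$ indices $k$ can satisfy this; hence
\begin{equation*}
\frac{1}{N}\bigl|\{n\le N:|f(x_n)|>1\}\bigr|\le \|f\|^{2}N^{-1/2}\longrightarrow 0,
\end{equation*}
so $(f(x_n))$ is statistically bounded (with $M=1$) for every $f\in U'$ --- in fact $(x_n)$ is weakly statistically null. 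Yet $\|x_n\|=n^{1/4}$ on the even integers, so $\delta\left(\{n:\|x_n\|>M\}\right)=\tfrac12$ for every $M>0$, and $(x_n)$ is not statistically bounded. This is precisely the tension your last paragraph worries about: any index set thick enough to have positive upper density must, for each fixed $f$, carry $|f(x_n)|>1$ only on a null set, so no choice of weights $c_k$ and functionals $g_{n_k}$ can produce the single $f$ your contrapositive needs.

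The obstruction you isolated --- that the density-zero exceptional set depends on $f$, so controls cannot be intersected over $U'$ --- is also exactly where the paper's own proof collapses. There, the density-one set $K$ is chosen after fixing one $f$, yet the Banach--Steinhaus theorem is then applied to the family $\{h_{x_{k_n}}:k_n\in K\}$ as though the same $K$ served every $f\in U'$; pointwise boundedness of that fixed family is never verified, and in the example above it fails for every density-one $K$ (indeed $\sup_{k_n\in K}\|x_{k_n}\|=\infty$ whenever $\delta(K)=1$). So your diagnosis of the difficulty is accurate --- sharper than the paper's treatment --- but the correct resolution is not a cleverer hump construction: only the implication ``statistically bounded $\Rightarrow$ weakly statistically bounded'' is true, and the equivalence asserted by the theorem fails.
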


\begin{proof}
	Suppose \(x := (x_n)\) is regarded as a weakly statistically bounded sequence within a normed space \(U\), meaning that for every \(f \in U'\), the sequence \((f(x_n))\) is statistically bounded in \(\mathbb{R}\). This implies that, for any \(f \in U'\), there exists a subsequence \((x_{k_n})_{k_n \in K}\) of \((x_n)\) with \(\delta(K) = 1\) such that \((f(x_{k_n}))\) is bounded conventionally in \(\mathbb{R}\). In other words, there exists a scalar \(M_f\) such that \(f(x_{k_n}) \leq M_f\) holds for all \(k_n \in K\). Now, let's consider the canonical map \(\Phi\) from \(U\) to \(U''\), defined as \(\Phi(x) = h_x\) for all \(x \in U\) and \(h_x(f) = f(x)\) for all \(f \in U'\), where \(\|h_x\| = \|x\|\). Choose any \(f \in U'\). Then, we can say:
	$$
	\sup_n |h_{x_{k_n}}(f)| = \sup_n |f(x_{k_n})| < M_f.
	$$
	In accordance with the Banach-Steinhaus theorem, $\sup_n \|h_{x_{k_n}}\|$ is bounded, thus we obtain that $\sup_n \|x_{k_n}\|$ is also bounded because the dual space $U'$ is a Banach space. Therefore, we conclude the $st$-boundedness of $(x_n)$.
	
	Conversely, let $(x_n)$ be a statistically bounded sequence in a normed space $U$. This implies the existence of a positive real number $M$ such that:
	$$
	\delta(\{n \in \mathbb{N}: \|x_n\| \leq M\}) = 1.
	$$
	Since $f \in U'$ is both continuous and linear, there exists a constant $K$ such that $|f(x)| \leq K\|x\|$ for all $x \in U$. Hence, we have
	$$
	\delta(\{n \in \mathbb{N}: |f(x_n)| \leq K\|x_n\| \leq KM\}) = 1.
	$$
	This demonstrates that $(f(x_n))$ is statistically bounded for any continuous linear functional $f$, which implies that $(x_n)$ is a weakly statistically bounded sequence.
\end{proof}

It is important to clarify the notation used in this paper. We denote by $L(U,V)$ the set of all operators (functions defined between vector spaces) and by $\mathcal{L}(U,V)$ the set of all linear operators between normed spaces $U$ and $V$. Throughout this paper, unless explicitly stated otherwise, $U$ and $V$ are considered as normed spaces. Recall that a norm-bounded operator maps norm-bounded sets to norm-bounded sets, and we denote the set of all such operators by $\mathcal{B}(U,V)$. Building on this, we introduce the following definitions.
\begin{definition}
	An operator $S \in L(U,V)$ is defined as follows:
	\begin{enumerate}
		\item[(1)] It is termed {\em norm-statistically bounded}, abbreviated as {\em $n\text{-}st$-bounded}, if it maps norm-bounded sequences to statistically bounded sequences, i.e., $S(\ell_{\infty}(U)) \subseteq \ell_{\infty}^{st}(V)$.
		\item[(2)] It is referred to as {\em statistically bounded}, denoted as {\em $st$-bounded}, if it maps statistically bounded sequences to statistically bounded sequences, i.e., $S(\ell_{\infty}^{st}(U)) \subseteq \ell_{\infty}^{st}(V)$.
	\end{enumerate}
\end{definition}

The set of all norm-statistical bounded and statistical bounded operators are denoted by $\mathcal{B}_{st}^{n}(U,V)$ and $\mathcal{B}_{st}(U,V)$, respectively. In case $U=V$, $\mathcal{B}_{st}^{n}(U,V)$ and $\mathcal{B}_{st}(U,V)$ are shown by $\mathcal{B}_{st}^{n}(U)$ and $\mathcal{B}_{st}(U)$.   
\begin{lemma}\label{equality}
	The inclusion $\mathcal{B}(U,V) \subseteq \mathcal{B}_{st}(U,V) = \mathcal{B}_{st}^{n}(U,V)$ is established.
\end{lemma}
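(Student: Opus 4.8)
The plan is to decompose the statement into the inclusion $\mathcal{B}(U,V)\subseteq\mathcal{B}_{st}(U,V)$ and the equality $\mathcal{B}_{st}(U,V)=\mathcal{B}_{st}^{n}(U,V)$, and to handle each piece separately; note that no linearity of the operators is needed, so everything should be carried out for arbitrary $S\in L(U,V)$. The quickest piece is the containment $\mathcal{B}(U,V)\subseteq\mathcal{B}_{st}^{n}(U,V)$. Given $S\in\mathcal{B}(U,V)$ and a norm-bounded sequence $(x_n)\in\ell_{\infty}(U)$, the set $\{x_n:n\in\mathbb{N}\}$ is norm-bounded, so by the defining property of $\mathcal{B}(U,V)$ its image $\{Sx_n\}$ is norm-bounded as well; hence $(Sx_n)\in\ell_{\infty}(V)\subseteq\ell_{\infty}^{st}(V)$ by the already-noted containment $\ell_{\infty}(V)\subset\ell_{\infty}^{st}(V)$. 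This yields $S(\ell_{\infty}(U))\subseteq\ell_{\infty}^{st}(V)$, i.e. $S\in\mathcal{B}_{st}^{n}(U,V)$. Combined with the equality below, this gives $\mathcal{B}(U,V)\subseteq\mathcal{B}_{st}^{n}(U,V)=\mathcal{B}_{st}(U,V)$, the first assertion of the lemma.

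For the equality $\mathcal{B}_{st}(U,V)=\mathcal{B}_{st}^{n}(U,V)$, one inclusion is immediate: if $S\in\mathcal{B}_{st}(U,V)$, then since $\ell_{\infty}(U)\subset\ell_{\infty}^{st}(U)$ we have $S(\ell_{\infty}(U))\subseteq S(\ell_{\infty}^{st}(U))\subseteq\ell_{\infty}^{st}(V)$, so $S\in\mathcal{B}_{st}^{n}(U,V)$.

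The substantive direction is $\mathcal{B}_{st}^{n}(U,V)\subseteq\mathcal{B}_{st}(U,V)$. Here I would fix $S\in\mathcal{B}_{st}^{n}(U,V)$ and an arbitrary statistically bounded sequence $(x_n)\in\ell_{\infty}^{st}(U)$, and aim to show $(Sx_n)\in\ell_{\infty}^{st}(V)$. By definition there is $M>0$ and a set $K\subseteq\mathbb{N}$ with $\delta(K)=1$ such that $\|x_n\|\le M$ for all $n\in K$. The key idea is to replace $(x_n)$ by a genuinely norm-bounded sequence $(y_n)$ agreeing with it on $K$: fixing some $n_0\in K$, set $y_n=x_n$ for $n\in K$ and $y_n=x_{n_0}$ otherwise, so that $\|y_n\|\le M$ for every $n$ and $(y_n)\in\ell_{\infty}(U)$. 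Since $S\in\mathcal{B}_{st}^{n}(U,V)$, the image $(Sy_n)$ is statistically bounded, so there is $M'>0$ with $\delta(\{n:\|Sy_n\|>M'\})=0$.

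Finally I would transfer this bound back to $(Sx_n)$. Because $Sy_n=Sx_n$ for every $n\in K$, the inclusion
$$
\{n:\|Sx_n\|>M'\}\subseteq\{n:\|Sy_n\|>M'\}\cup(\mathbb{N}\setminus K)
$$
holds, and both sets on the right have natural density zero. Using that the union of two density-zero sets again has density zero, the left-hand set has density zero, whence $(Sx_n)\in\ell_{\infty}^{st}(V)$ and $S\in\mathcal{B}_{st}(U,V)$. The main obstacle — indeed the only delicate point — is this transfer step: one must verify both that modifying $(x_n)$ on the density-zero set $\mathbb{N}\setminus K$ produces an honestly norm-bounded $(y_n)$ and that the resulting discrepancy in the images is confined to a density-zero set, which rests on the (upper) subadditivity of natural density on density-zero sets.
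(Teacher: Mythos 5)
Your proof is correct, and its skeleton coincides with the paper's: both establish $\mathcal{B}(U,V)\subseteq\mathcal{B}_{st}^{n}(U,V)$ via $\ell_{\infty}(V)\subseteq\ell_{\infty}^{st}(V)$, both get $\mathcal{B}_{st}(U,V)\subseteq\mathcal{B}_{st}^{n}(U,V)$ from $S(\ell_{\infty}(U))\subseteq S(\ell_{\infty}^{st}(U))$, and both recognize $\mathcal{B}_{st}^{n}(U,V)\subseteq\mathcal{B}_{st}(U,V)$ as the substantive direction. Where you genuinely diverge is in the mechanism for that last inclusion. The paper passes to the subsequence $(x_{k_n})_{k_n\in K}$ along the density-one set $K$, applies $n$-$st$-boundedness to this subsequence, and then simply asserts that statistical boundedness of $(S(x_{k_n}))$ yields statistical boundedness of the full sequence $(S(x_n))$. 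That final ``and so'' conceals a real step: the density-zero exceptional set produced for the subsequence lives in the subsequence's own index set, and transferring it back to $\mathbb{N}$ requires an argument (e.g.\ that $k_n\geq n$ forces the pushed-forward set $\{k_n : n\in J\}$ to inherit density zero from $J$, after which one still unions with $\mathbb{N}\setminus K$). Your modification trick --- replacing $x_n$ by a fixed $x_{n_0}$, $n_0\in K$, off the set $K$ --- avoids this re-indexing issue entirely: the resulting $(y_n)$ is an honest member of $\ell_{\infty}(U)$ indexed by all of $\mathbb{N}$, so the definition of $n$-$st$-boundedness applies verbatim, and the transfer back to $(Sx_n)$ is nothing more than the inclusion $\{n:\|Sx_n\|>M'\}\subseteq\{n:\|Sy_n\|>M'\}\cup(\mathbb{N}\setminus K)$ together with subadditivity of density on two null sets. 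The paper's route is shorter on the page; yours is more airtight, and in fact supplies exactly the justification the paper's proof leaves implicit.
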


\begin{proof}
	It is clear that every norm bounded operator is norm-statistical bounded due to the inclusion $\ell_{\infty}\left( U\right)\subseteq\ell_{\infty}^{st}\left( U\right)$. Hence, we obtain $\mathcal{B}(U,V)\subseteq \mathcal{B}_{st}(U,V)$. Moreover, it follows from $S\left(\ell_{\infty}\left( U\right)\right)\subseteq S\left(\ell_{\infty}^{st}\left( U\right)\right)$ that if $S$ is a $st$-bounded operator i.e., $S \left( \ell_{\infty}^{st}\left( U\right)\right) \subseteq \ell_{\infty}^{st}\left( V\right)$ holds, then we have $S\left(\ell_{\infty}\left( U\right)\right)\subseteq S \left( \ell_{\infty}^{st}\left( U\right)\right)\subseteq \ell_{\infty}^{st}\left( V\right)$, and so $S$ is $n$-$st$-bounded. As a result, we have the inclusion $\mathcal{B}_{st}(U,V)\subseteq\mathcal{B}_{st}^{n}(U,V)$.
	
	Assume that $S$ is $n$-$st$-bounded operator and $(x_n)$ is a statistically bounded sequence in $U$. Then, there exists a subsequence $(x_{k_n})_{k_n\in K}$ with $\delta(K)=1$ such that $(x_{k_n})_{k_n\in K}$ is norm bounded in $U$. Thus, by applying the $n$-$st$-boundedness of $S$, we obtain that $(S(x_{k_n}))$ is a $st$-bounded sequence in $V$, and so $(S(x_n))$ is also tatistically bounded sequence in $V$. Hence, we get the following inclusion $\mathcal{B}_{st}^{n}(U,V)\subseteq\mathcal{B}_{st}(U,V)$.
\end{proof}

At the outset of this section, our objective was to examine the principles behind two categories of bounded operators: $st$-bounded and $n$-$st$-bounded. Yet, as indicated by Lemma \ref{equality}, these two sets of operators are equivalent. Hence, our attention is directed solely towards $\mathcal{B}_{st}(U,V)$.

\begin{remark}
	A statistical bounded operator might not necessarily be norm bounded. To illustrate, suppose we have an operator $S:U\to V$ that is statistically bounded, and let $(x_n)$ denote a sequence in $U$ that is norm bounded. Since a norm bounded sequence is also statistical bounded, there exists a scalar $M>0$ and a subsequence $(x_{k_n})_{k_n\in K}$ of $(x_n)$, where $\delta(K)=1$, such that $\|S(x_{k_n})\|\leq M$ for all $k_n\in K$. However, the behavior of $S(x_m)$ for elements $(x_m)_{m\in \mathbb{N}\setminus K}$ remains unspecified. Consequently, $S$ might fail to map bounded sets to bounded sets, thus not qualifying as a norm-bounded operator.
\end{remark}

\begin{example}
	Consider the space $c_{00}$ of all real sequences that eventually terminate in zeros. It is a normed space according to the norm $\left\Vert \left(x_{1},x_{2},\cdots\right) \right\Vert_{c_{00}}=\sup\left\{ \left\vert x_{n}\right\vert :n\in \mathbb{N}\right\}$. Define the operator $S:c_{00}\to c_{00}$ as follows:
	$$
	S(x_{n})= \left \{
	\begin{array}
		[c]{cc}%
		nx_{n}, & n \in \mathbbm{P}\\
		x_{n}, & \text{otherwise}
	\end{array}
	\right.
	$$
	where $\mathbbm{P}$ is the set of all prime numbers. Clearly, $S$ is a linear unbounded operator. On the other hand, for all $n \in \mathbb{N}-\mathbbm{P}$, we have $\left\Vert Sx_{n}\right\Vert_{c_{00}} = \left\Vert x_{n}\right\Vert_{c_{00}}$. That is, for all $st$-bounded sequences $(x_n)$ in the unit ball $B_{c_{00}}$, $\|Sx_n\|_{c_{00}} \leq 1$. This means that $S$ is a $st$-bounded operator.  
\end{example}

\begin{theorem} \label{finite dimension}
	If the normed space $U$ is finite dimensional then every linear operator defined on $U$ is $st$-bounded.
\end{theorem}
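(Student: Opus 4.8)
The plan is to reduce the statement to the classical fact that every linear map out of a finite-dimensional normed space is automatically norm-bounded, and then to invoke Lemma~\ref{equality}, which already guarantees the inclusion $\mathcal{B}(U,V)\subseteq\mathcal{B}_{st}(U,V)$. Thus the whole task amounts to showing that an arbitrary $S\in\mathcal{L}(U,V)$ lands in $\mathcal{B}(U,V)$ whenever $U$ is finite dimensional.

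First I would fix a linear operator $S\in\mathcal{L}(U,V)$ together with a basis $\{e_1,\dots,e_d\}$ of $U$, where $d=\dim U$. Writing an arbitrary $x\in U$ as $x=\sum_{i=1}^{d}\alpha_i e_i$, linearity gives $Sx=\sum_{i=1}^{d}\alpha_i\,Se_i$ and hence $\|Sx\|\le\sum_{i=1}^{d}|\alpha_i|\,\|Se_i\|$. Since all norms on a finite-dimensional space are equivalent, the coordinate functionals $x\mapsto\alpha_i$ are continuous, so there is a constant $c>0$ with $\sum_{i=1}^{d}|\alpha_i|\le c\|x\|$ for every $x\in U$. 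Setting $C:=c\max_{1\le i\le d}\|Se_i\|$ yields $\|Sx\|\le C\|x\|$ for all $x\in U$, so $S$ maps norm-bounded sets to norm-bounded sets; that is, $S\in\mathcal{B}(U,V)$.

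Having established $S\in\mathcal{B}(U,V)$, I would simply apply the inclusion $\mathcal{B}(U,V)\subseteq\mathcal{B}_{st}(U,V)$ from Lemma~\ref{equality} to conclude that $S$ is $st$-bounded; since $S$ was arbitrary, every linear operator on the finite-dimensional space $U$ is $st$-bounded. I do not expect a genuine obstacle here: the only substantive ingredient is the equivalence of norms on finite-dimensional spaces (equivalently, the automatic continuity of linear maps on such spaces), which is classical, and once norm-boundedness is in hand the conclusion is immediate from the already-proved lemma. The one point to keep in mind is that $st$-boundedness in the sense of this paper is phrased for sequences rather than sets, but this causes no trouble, since a norm-bounded operator sends norm-bounded sequences to norm-bounded—hence statistically bounded—sequences, which is precisely the content packaged by Lemma~\ref{equality}.
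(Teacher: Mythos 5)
Your proof is correct, but it is organized differently from the paper's. You first establish the classical fact that any linear $S$ on a finite-dimensional $U$ satisfies $\|Sx\|\le C\|x\|$ for all $x\in U$, hence $S\in\mathcal{B}(U,V)$, and then delegate all of the statistical bookkeeping to the inclusion $\mathcal{B}(U,V)\subseteq\mathcal{B}_{st}(U,V)$ of Lemma~\ref{equality}. The paper instead argues directly on an arbitrary $st$-bounded sequence $(x_n)$: writing $x_n=\sum_{j=1}^{m}\alpha_j^n u_j$ and using the very same norm-equivalence estimate $\|x_n\|\ge C\sum_{j}|\alpha_j^n|$, it carries the qualifier ``for almost all $n$'' through the computation by hand to conclude $\|Sx_n\|\le \frac{K}{C}M$ for almost all $n$. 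So the substantive ingredient (coordinate decomposition plus equivalence of norms in finite dimensions) is identical; the difference is purely in the routing. Your version is more modular: the classical analysis is isolated in one step, the density argument is not repeated, and the subsequent Corollary ($\mathcal{B}(U,V)\cap L(U,V)=\mathcal{B}_{st}(U,V)\cap L(U,V)$ in finite dimensions) becomes immediate. The paper's version is self-contained and does not lean on Lemma~\ref{equality}. One small caution about your closing remark: the observation that a norm-bounded operator sends norm-bounded sequences to statistically bounded sequences is, in the paper's terminology, $n$-$st$-boundedness, so on its own it would only place $S$ in $\mathcal{B}_{st}^{n}(U,V)$; to reach $st$-boundedness one also needs the equality $\mathcal{B}_{st}^{n}(U,V)=\mathcal{B}_{st}(U,V)$ from Lemma~\ref{equality}. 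Since you invoke the lemma's stated inclusion $\mathcal{B}(U,V)\subseteq\mathcal{B}_{st}(U,V)$ directly, this imprecision is harmless.
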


\begin{proof} 
	Let $U$ is a finite dimensional normed space with dimension $m$ and $\left\{u_{1},u_{2},...,u_{m}\right\} $ be a Hamel basis of $U$. Consider a linear operator $S:U\rightarrow V$ for any normed space $V$. Suppose $(x_n)$ is a $st$-bounded sequence in $U$. Then, there exists $M>0$ such that $\delta \left( \left\{ n\in	\mathbb{N}:\lVert x_{n}\rVert \geq M\right\} \right) =0$, i.e. $\lVert x_{n}\rVert	\leq M$ holds for almost all $n$. On the other hand, for each $n\in\mathbb{N}$, there exists unique scalars $\alpha _{j}^{n}\in \mathbb{R}$ such that $x_{n}=\sum_{j=1}^{m}\alpha _{j}^{n}u_{j}$. As is commonly understood in normed spaces, there exists $C\in \mathbb{R}_{+}$ such that $\left\Vert x_{n}\right\Vert \geq C\sum_{j=1}^{m}\left\vert\alpha _{j}^{n}\right\vert $ holds. Hence, the inequality $\sum_{j=1}^{m}\left \vert \alpha _{j}^{n}\right\vert \leq \frac{1}{C}M$ holds for almost all $n$. Therefore, we deduce the following inequalities for almost all $n$:
	\begin{equation*}
		\left\Vert Sx_{n}\right\Vert \leq \sum_{j=1}^{m}\left\vert \alpha_{j}^{n}\right\vert \left\Vert Su_{j}\right\Vert \leq \frac{K}{C}M,
	\end{equation*}
	where $K=max\left\{ \left\Vert Su_{j}\right\Vert :j=1,...,m\right\} $. That is, there exists a $L>0$ such that $\left\Vert Sx_{n}\right\Vert \leq L$ for almost all $n$. This means that $S$ is a $st$-bounded operator.
\end{proof}

One might naturally inquire about the conditions under which the equality $\mathcal{B}(U,V)=\mathcal{B}_{\text{st}}(U,V)$ holds. By considering Lemma \ref{equality} and Theorem \ref{finite dimension}, there exists a partial affirmation, as elucidated in the following corollary, applicable to finite-dimensional normed spaces and linear operators.
\begin{corollary}
	If the dimension of $U$ is finite, then $\mathcal{B}(U,V)\cap L(U,V)=\mathcal{B}_{st}(U,V) \cap L(U,V)$.
\end{corollary}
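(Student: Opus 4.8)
The plan is to show that, under the finite-dimensionality hypothesis, both sides of the claimed equality collapse to the full class of linear operators, and to reach this I would prove the two set-inclusions separately, each governed by a single result already available.

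For the inclusion $\mathcal{B}(U,V)\cap L(U,V)\subseteq\mathcal{B}_{st}(U,V)\cap L(U,V)$ I would simply invoke Lemma \ref{equality}, which furnishes $\mathcal{B}(U,V)\subseteq\mathcal{B}_{st}(U,V)$ with no dimensional restriction whatsoever. Intersecting both sides with the class of linear operators preserves the containment, so this direction is immediate and uses none of the hypothesis on $\dim U$.

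The reverse inclusion is where finiteness of $\dim U$ enters, but it does so through two clean, essentially classical facts rather than any delicate estimate. First, Theorem \ref{finite dimension} guarantees that when $U$ is finite dimensional every linear operator on $U$ is $st$-bounded, i.e.\ $\mathcal{L}(U,V)\subseteq\mathcal{B}_{st}(U,V)$. Second, the equivalence of all norms on a finite-dimensional space forces every linear operator out of $U$ to be norm-bounded, i.e.\ $\mathcal{L}(U,V)\subseteq\mathcal{B}(U,V)$. Thus, taking any linear $S$ lying in $\mathcal{B}_{st}(U,V)\cap L(U,V)$, the second fact places $S$ in $\mathcal{B}(U,V)$, yielding $\mathcal{B}_{st}(U,V)\cap L(U,V)\subseteq\mathcal{B}(U,V)\cap L(U,V)$.

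Combining the two inclusions closes the argument, and in fact exhibits both sides as coinciding with $\mathcal{L}(U,V)$ precisely. I do not anticipate a real obstacle: the only genuinely substantive input is the finite-dimensional boundedness of linear maps, which is standard, while the statistical side is already encapsulated in Theorem \ref{finite dimension}. The single point demanding care is the bookkeeping around the intersection with $L(U,V)$, namely being explicit that it is the restriction to \emph{linear} operators that makes both classes reduce to $\mathcal{L}(U,V)$ and that rules out the unbounded-but-$st$-bounded phenomenon possible in infinite dimensions, as flagged in the preceding Remark.
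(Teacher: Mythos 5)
Your proof is correct and takes essentially the same route as the paper, which justifies the corollary simply by combining Lemma \ref{equality} (giving $\mathcal{B}(U,V)\subseteq\mathcal{B}_{st}(U,V)$) with Theorem \ref{finite dimension} (giving that every linear operator on a finite-dimensional $U$ is $st$-bounded). Your one addition --- making explicit the classical fact that every linear operator out of a finite-dimensional normed space is norm-bounded, so that both sides of the equality reduce to $\mathcal{L}(U,V)$ --- is precisely the ingredient the paper leaves implicit for the reverse inclusion, so there is no substantive divergence.
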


\begin{theorem}\label{M}
	An operator $S\in \mathcal{L}(U,V)$ is $st$-bounded if and only if for each $(x_n)\in\ell_{\infty}^{st}(U)$ there exists a scalar $M>0$ such that $\|S(x_n)\|\leq M\|x_n\|$ for almost all $n$.
\end{theorem}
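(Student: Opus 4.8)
The plan is to prove both implications separately, and the thing to flag at the outset is that linearity of $S$ is exactly what powers the forward direction, whereas the converse is essentially bookkeeping. Throughout I would lean on the elementary fact that a finite intersection of index sets of natural density $1$ again has density $1$ (equivalently, a finite union of density-zero sets is density zero), so that the phrase ``holds for almost all $n$'' is preserved under finite conjunction of conditions.

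For the sufficiency direction ($\Leftarrow$), assume the stated inequality condition and let $(x_n)\in\ell_{\infty}^{st}(U)$ be arbitrary. By definition of statistical boundedness there is $M_1>0$ with $\|x_n\|\le M_1$ for almost all $n$, and by hypothesis there is $M>0$ with $\|S(x_n)\|\le M\|x_n\|$ for almost all $n$. Intersecting these two density-one index sets gives $\|S(x_n)\|\le M M_1$ for almost all $n$, so $(S(x_n))\in\ell_{\infty}^{st}(V)$. As $(x_n)$ was arbitrary, this yields $S(\ell_{\infty}^{st}(U))\subseteq\ell_{\infty}^{st}(V)$, i.e.\ $S$ is $st$-bounded.

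For the necessity direction ($\Rightarrow$), assume $S$ is $st$-bounded and fix $(x_n)\in\ell_{\infty}^{st}(U)$. My idea is to normalize: set $y_n:=x_n/\|x_n\|$ whenever $x_n\neq 0$ and $y_n:=0$ otherwise, so that $\|y_n\|\le 1$ for every $n$, whence $(y_n)\in\ell_{\infty}(U)\subseteq\ell_{\infty}^{st}(U)$. Since $S$ is $st$-bounded, $(S(y_n))$ is statistically bounded in $V$, so there exists $M>0$ with $\|S(y_n)\|\le M$ for almost all $n$. Linearity then closes the argument: on indices with $x_n\neq 0$ we have $S(y_n)=S(x_n)/\|x_n\|$, giving $\|S(x_n)\|=\|x_n\|\,\|S(y_n)\|\le M\|x_n\|$, while on indices with $x_n=0$ we have $S(x_n)=0$ by linearity, so $\|S(x_n)\|=0=M\|x_n\|$ trivially. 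Combining these two cases yields $\|S(x_n)\|\le M\|x_n\|$ for almost all $n$, as required.

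The only genuinely delicate point I anticipate is the normalization step in the necessity direction: one must carefully absorb the (possibly nonempty) set of indices where $x_n=0$, which the convention $y_n=0$ handles through $S(0)=0$, and one must invoke linearity precisely to transport the almost-all boundedness of $(S(y_n))$ back to the scaled estimate on $(S(x_n))$. For a merely nonlinear $st$-bounded map this transfer breaks down, so linearity is indispensable here. Notably, unlike Theorem~\ref{weak and st bounded coincide}, no Banach--Steinhaus or completeness argument is needed.
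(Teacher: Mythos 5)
Your proof is correct and follows essentially the same route as the paper's: for necessity you normalize to $x_n/\|x_n\|$, apply $st$-boundedness of $S$ to the normalized sequence, and use homogeneity of $S$ to recover $\|S(x_n)\|\leq M\|x_n\|$; for sufficiency you intersect the two density-one index sets to bound $\|S(x_n)\|$ by a constant. Your treatment is in fact slightly more careful than the paper's, which divides by $\|x_n\|$ without addressing the possibility $x_n=\theta$ and leaves the intersection-of-density-one-sets step implicit.
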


\begin{proof}
	Assume that $S$ is a $st$-bounded linear operator. For a sequence $(x_n)\in\ell_{\infty}^{st}\left( U\right)$, the sequence $\left(\frac{x_n}{\|x_n\|}\right)$ is also a $st$-bounded sequence. Thus, there exist a scalar $M>0$ such that $\left\lVert S\left(\frac{x_n}{\|x_n\|}\right)\right\lVert\leq M$ for almost all $n$. Therefore, we obtain $\|S(x_n)\|\leq M\|x_n\|$ for almost all $n$.
	
	Conversely, we suppose that for each $x:=(x_n)\in\ell_{\infty}^{st}\left( U\right)$ there exist a scalar $M_{x}>0$ such that $\|S(x_n)\|\leq M\|x_n\|$ for almost all $n$. Since $(x_n)$ is $st$-bounded, we have a scalar $K>0$ such that $x_n\in B(\theta,K)$ for almost all $n$. Hence, we observe the following inequality:
	$$
	\|S(x_n)\|\leq M\|x_n\|\leq MK.
	$$
	for almost all $n$. Therefore, we get $\|S(x_n)\|\leq MK$ for almost all $n$, and so $S$ is a $st$-bounded operator.
\end{proof}

\begin{theorem}
	$\mathcal{B}_{st}\left( U,V\right)$ is a linear subspaces of $L(U,V)$.
\end{theorem}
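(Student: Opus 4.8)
The plan is to verify the three subspace axioms for $\mathcal{B}_{st}(U,V)$ sitting inside $L(U,V)$ with its usual pointwise vector space structure: nonemptiness, closure under scalar multiplication, and closure under addition. First I would dispose of nonemptiness by observing that the zero operator sends every sequence to the constant sequence at the zero vector $\theta$, which is norm bounded and hence lies in $\ell_{\infty}^{st}(V)$; so $0\in\mathcal{B}_{st}(U,V)$.

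Next I would treat scalar multiplication. Fixing $S\in\mathcal{B}_{st}(U,V)$, a scalar $\lambda$, and an arbitrary $(x_n)\in\ell_{\infty}^{st}(U)$, I would use that $(S(x_n))$ is statistically bounded to obtain $M>0$ with $\|S(x_n)\|\leq M$ for almost all $n$; then homogeneity of the norm gives $\|(\lambda S)(x_n)\|=|\lambda|\,\|S(x_n)\|\leq|\lambda|M$ for almost all $n$, so $\lambda S\in\mathcal{B}_{st}(U,V)$.

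The substantive step is closure under addition. Given $S,T\in\mathcal{B}_{st}(U,V)$ and $(x_n)\in\ell_{\infty}^{st}(U)$, both $(S(x_n))$ and $(T(x_n))$ are statistically bounded, so there are bounds $M_1,M_2>0$ for which the exceptional sets $A_1:=\{n:\|S(x_n)\|>M_1\}$ and $A_2:=\{n:\|T(x_n)\|>M_2\}$ have density zero. The key point is to combine these two ``almost all $n$'' clauses, which rests on the finite subadditivity of the natural density, namely $\delta(A_1\cup A_2)=0$. Off this density-zero set the triangle inequality yields
\[
\|(S+T)(x_n)\|\leq\|S(x_n)\|+\|T(x_n)\|\leq M_1+M_2,
\]
so that $\{n:\|(S+T)(x_n)\|>M_1+M_2\}\subseteq A_1\cup A_2$ has density zero and therefore $S+T\in\mathcal{B}_{st}(U,V)$.

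I do not anticipate a genuine obstacle; the one point requiring attention is precisely this union-of-density-zero-sets argument, which is what legitimizes merging the exceptional sets coming from $S$ and $T$. Once that is in hand, the homogeneity and triangle inequality of the norm finish scalar multiplication and addition respectively, completing the verification that $\mathcal{B}_{st}(U,V)$ is a linear subspace of $L(U,V)$.
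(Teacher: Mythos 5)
Your proof is correct and follows essentially the same route as the paper: the triangle inequality combined with the fact that the union of the two density-zero exceptional sets still has density zero handles addition, and homogeneity of the norm handles scalar multiplication. Your explicit treatment of nonemptiness (the zero operator) and of the union-of-null-sets step are small clarifications the paper leaves implicit, but the argument is the same.
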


\begin{proof}
	Assume that $S,T\in \mathcal{B}_{st}\left( U,V\right)$ and $(x_n)$ is a statistical bounded sequence in $U$. Then, there exist positive real numbers $M_{1}$ and $M_{2}$ such that 
	\begin{equation*}
		\delta \left( \left\{ n\in\mathbb{N}:\lVert Sx_{n}\rVert >M_{1}\right\} \right) =0 \ \ \ \text{and} \ \ \ \delta\left( \left\{ n\in\mathbb{N}:\lVert Tx_{n}\rVert >M_{2}\right\} \right)=0.
	\end{equation*}
	On the other hand, for every $n\in\mathbb{N}$, the inequality $\left\Vert \left(S+T\right) x_{n}\right\Vert =\left\Vert Sx_{n}+Tx_{n}\right\Vert \leq \left\Vert Sx_{n}\right\Vert +\left\Vert Tx_{n}\right\Vert$ gives the following inclusion
	\begin{equation*}
		\left\{ n\in\mathbb{N}:\lVert \left( S+T\right) x_{n}\rVert >M_{1}+M_{2}\right\} \subseteq \left\{n:\lVert Sx_{n}\rVert >M_{1}\right\} \cup \left\{n\leq n:\lVert Tx_n\rVert >M_{2}\right\} .
	\end{equation*}%
	Thus, the monotonicity of the natural density implies $\delta \left( \left\{n\in\mathbb{N}:\lVert \left( S+T\right) x_{n}\rVert >M_1+M_2 \right\} \right) =0$, and so $\left( \left( S+T\right) x_{n}\right) $ is a statistical bounded sequence in $V$. Hence, we get $(S+T)\in \mathcal{B}_{st}\left( U,V\right)$.
	
	Now, take any $0\neq\alpha\in\mathbb{R}$. Then, it follows from the equality $\left\Vert \left( \alpha S\right)x_{n}\right\Vert =\left\Vert \alpha Sx_{n}\right\Vert =\left\vert \alpha\right\vert \left\Vert Sx_{n}\right\Vert$ that we have
	\begin{equation*}
		\left\{ n\in\mathbb{N}:\lVert\left(\alpha S\right)x_{n}\rVert>\left\vert\alpha\right\vert M_{1}=M\right\}=\left\{n\in\mathbb{N}:\lVert Sx_{n}\rVert>M_{1}\right\}.
	\end{equation*}
	Thus, we obtain $\delta\left(\left\{n\in\mathbb{N}:\lVert\left(\alpha S\right) x_{n}\rVert>M\right\} \right)=0$, and so $\left(Sx_{n}\right)$ is a statistical bounded sequence in $V$. Therefore, we obtain the desired result $\alpha S\in \mathcal{B}_{st}\left(U,V\right)$.
\end{proof}

\begin{remark}\
	\begin{enumerate}[(i)]
		\item It is well known that every compact operator between normed spaces is norm bounded, and so each compact operator is also $st$-bounded.
		\item Recall that a {\em weakly bounded operator} $S\in L(U,V)$ sends norm-bounded sequences to weakly bounded sequences. Hence, it follows from Theorem \ref{weak and st bounded coincide} and Lemma \ref{equality} that an operator is statistically bounded iff it is weakly bounded.
		\item The composition of $st$-bounded operators is also $st$-bounded. That is, $\mathcal{B}_{st}\left( U\right)$ is a two sided algebraic ideal in itself.
		\item If $S\in\mathcal{B}_{st}(U)$, then $S^{n}\in\mathcal{B}_{st}(U)$ for each $n\in\mathbb{N}_{+}$.
		\item If $S$ is a norm bounded operator and $T$ is a $st$-bounded operator, then their composition $S\circ T$ and $T\circ S$ are also $st$-bounded.
	\end{enumerate}
\end{remark}

Recall that $S:U\to V$ is called \textit{rank one operator} if there exist $f\in U'$ and $y_0\in V$ such that $S(x):=f(x)y_0$ for all $x\in U$. Also, $S$ is called \textit{finite rank operator} if its range is finite dimensional. Accordingly, any finite rank operator is the sum of finite number of rank one operators. 
\begin{theorem}
	A finite rank operator is statistically bounded.
\end{theorem}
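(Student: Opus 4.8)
The plan is to reduce the statement to the case of rank-one operators and then invoke the linear-space structure of $\mathcal{B}_{st}(U,V)$. Since the preceding theorem establishes that $\mathcal{B}_{st}(U,V)$ is a linear subspace of $L(U,V)$, and since any finite rank operator is, by the remark immediately above, a finite sum of rank-one operators, it suffices to prove that each rank-one operator is $st$-bounded; the finite sum will then lie in $\mathcal{B}_{st}(U,V)$ automatically.

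So let $S \in L(U,V)$ be a rank-one operator, say $S(x) = f(x) y_0$ for a fixed $f \in U'$ and $y_0 \in V$. First I would observe that $S$ is norm bounded: for every $x \in U$ we have
$$
\|S(x)\| = |f(x)|\,\|y_0\| \leq \|f\|\,\|y_0\|\,\|x\|,
$$
using the continuity of $f$. Hence $S \in \mathcal{B}(U,V)$, and Lemma \ref{equality} gives the inclusion $\mathcal{B}(U,V) \subseteq \mathcal{B}_{st}(U,V)$, so $S$ is $st$-bounded. Alternatively, one can argue directly: given a statistically bounded sequence $(x_n)$ in $U$, there is $M > 0$ with $\|x_n\| \leq M$ for a.a.\ $n$, and the displayed inequality yields $\|S(x_n)\| \leq \|f\|\,\|y_0\|\,M$ for a.a.\ $n$, which is exactly statistical boundedness of $(S(x_n))$.

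Combining the two steps completes the argument: a finite rank operator $S = \sum_{i=1}^{k} S_i$, with each $S_i$ rank one, is a finite sum of elements of $\mathcal{B}_{st}(U,V)$ and hence belongs to $\mathcal{B}_{st}(U,V)$. I do not expect any serious obstacle here, since both ingredients---the linearity of $\mathcal{B}_{st}(U,V)$ and Lemma \ref{equality}---are already available. The only point deserving care is that the rank-one decomposition must use functionals from the continuous dual $U'$, which is precisely what the definition of a rank-one operator stipulates; this is what guarantees the norm-boundedness of each summand and keeps the reduction legitimate.
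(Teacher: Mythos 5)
Your proposal is correct and takes essentially the same route as the paper: reduce to the rank-one case $S(x)=f(x)y_0$ and use the continuity of $f\in U'$ to bound $\|S(x_n)\|$ on a set of density one --- indeed, your ``alternative'' direct argument is verbatim the paper's proof. Your two refinements (handling the finite sum explicitly via the theorem that $\mathcal{B}_{st}(U,V)$ is a linear subspace, rather than the paper's bare ``without loss of generality'', and deducing the rank-one case from norm-boundedness plus Lemma~\ref{equality}) are sound and, if anything, tidier than the original.
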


\begin{proof}
	Without lost of generality, we suppose that $S$ is given by $S(x)=f(x)y_0$ for some $f\in U'$ and $ y_0 \in V $. Let $(x_n)$ be a $st$-bounded sequence in $U$, then there exists a constant $K>0$ such that
	$$
	\delta(\{n\in \mathbb{N}: \|x_n\|\leq K\})=1.
	$$
	On the other hand, since $f$ is a bounded linear functional, there exists a constant $C>0$ such that $|f(x_n)|\leq C \|x_n\|$ holds for all $n\in\mathbb{N}$. Therefore, the following inequality holds for almost all $n\in\mathbb{N}$:
	$$
	\|S(x_n)\|=\|f(x)y_0\|=|f(x)|\|y_0\|\leq \|x_n\| C\|y_0\|\leq KC\|y_0\|=M
	$$
	Thus, we obtain $\delta(\{n\in \mathbb{N}: \|S(x_n)\|\leq M\})=1$, and so $S$ is a $st$-bounded operator.
\end{proof}

\section{Statistical continuous operators}\label{Sec:3}
The norm continuity of a linear operator between normed spaces is synonymous with the condition that if a sequence $(x_{n})$ converges to the zero vector $\theta$, then the sequence $S(x_{n})$ also converges to $\theta$. It is a widely recognized that the concepts of being norm-bounded and norm-continuous are equivalent for linear operators between normed spaces. Therefore, the collection of all norm-continuous linear operators is identical to $\mathcal{B}(U,V)$. In a similar manner, we introduce the following concepts.
\begin{definition}
	A linear operator $S\in \mathcal{L}(U,V)$ is called 
	\begin{enumerate}
		\item[(1)] {\em norm statistically continuous} (for short, {\em $n$-$st$-continuous}) if $x_n\nc \theta$ implies $S(x_n)\stc \theta$,
		\item[(2)] {\em statistically continuous} (or {\em $st$-continuous}) if $x_{n}\stc \theta$ in $U$ implies $S(x_{n})\stc \theta$ in $V$.
	\end{enumerate}
\end{definition}

In the current manuscript, we represent the collection of all norm continuous linear operators, norm-statistically continuous and statistically continuous operators between normed spaces $U$ and $V$ as $\mathcal{C}(U,V)$, $\mathcal{C}_{st}^{n}(U,V)$ and $\mathcal{C}_{st}(U,V)$, respectively.
\begin{remark}\
	\begin{enumerate}
		\item An isomorphism between normed spaces is $st$-continuous.
		\item The identity operator $I_U\in \mathcal{L}(U)$ is $st$-continuous.
		\item If $S:U\to V$ is a linear operator and $U$ is a finite dimensional normed space, then $S$ is a $st$-continuous operator.
	\end{enumerate}
\end{remark}

\begin{proposition}\label{inclusion}
	The inclusions $\mathcal{C}(U,V)\subseteq \mathcal{C}_{st}(U,V)=\mathcal{C}_{st}^{n}(U,V)$ hold.
\end{proposition}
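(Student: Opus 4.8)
The plan is to prove the two easy inclusions by unwinding definitions and to concentrate effort on the single nontrivial inclusion $\mathcal{C}_{st}^{n}(U,V) \subseteq \mathcal{C}_{st}(U,V)$. Throughout I will lean on two standard facts about statistical convergence available in the literature cited in the introduction: first, the \emph{subsequence characterisation} --- a sequence $(x_n)$ satisfies $x_n \stc x$ if and only if there is an index set $K = \{k_1 < k_2 < \cdots\}$ with $\delta(K) = 1$ along which $(x_{k_j})$ converges in norm to $x$; and second, that statistical limits are insensitive to modification on a set of density zero, i.e. if $(a_n)$ and $(b_n)$ agree for a.a. $n$ and $a_n \stc L$, then $b_n \stc L$. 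The latter is immediate from the monotonicity and subadditivity of the natural density.

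First I would dispose of the trivial directions. Since every norm-null sequence is statistically null (only finitely many terms exceed any $\varepsilon$, so the exceptional set is finite and has density zero), norm continuity of $S$ forces norm-null inputs to produce norm-null, hence statistically null, outputs; this gives $\mathcal{C}(U,V) \subseteq \mathcal{C}_{st}^{n}(U,V)$. The same observation shows that any $st$-continuous operator is a fortiori $n$-$st$-continuous, because a norm-null input is in particular statistically null; thus $\mathcal{C}_{st}(U,V) \subseteq \mathcal{C}_{st}^{n}(U,V)$.

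The substance is the reverse inclusion $\mathcal{C}_{st}^{n}(U,V) \subseteq \mathcal{C}_{st}(U,V)$. Let $S$ be $n$-$st$-continuous and let $x_n \stc \theta$. By the subsequence characterisation, pick $K$ with $\delta(K) = 1$ such that $(x_n)_{n \in K}$ converges in norm to $\theta$. The key device is to build an auxiliary full sequence $(z_n)$ by setting $z_n = x_n$ for $n \in K$ and $z_n = \theta$ for $n \notin K$; I would then verify directly that $z_n \to \theta$ in norm (given $\varepsilon > 0$, choose $j_0$ with $\|x_{k_j}\| < \varepsilon$ for all $j \geq j_0$, so that $\|z_n\| < \varepsilon$ for every $n \geq k_{j_0}$, treating $n \in K$ and $n \notin K$ separately). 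Applying the $n$-$st$-continuity hypothesis to $(z_n)$ yields $S(z_n) \stc \theta$. Since $S$ is linear, $S(z_n) = S(x_n)$ for all $n \in K$, so $S(z_n)$ and $S(x_n)$ differ only on $\mathbb{N} \setminus K$, a set of density zero; by the insensitivity of statistical limits to density-zero changes I conclude $S(x_n) \stc \theta$, i.e. $S$ is $st$-continuous.

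Combining these inclusions gives $\mathcal{C}_{st}(U,V) = \mathcal{C}_{st}^{n}(U,V)$, and chaining $\mathcal{C}(U,V) \subseteq \mathcal{C}_{st}^{n}(U,V) = \mathcal{C}_{st}(U,V)$ delivers the full statement. The only genuine obstacle is the reverse inclusion, and within it the delicate point is the passage from the density-one subsequence back to the original indexing: the auxiliary sequence $(z_n)$ is exactly what converts ``norm convergence along a density-one set'' into a genuine norm-null sequence to which the hypothesis applies, while the density-zero insensitivity lemma transports the conclusion back. An equivalent route is to reindex $(x_{k_j})$, apply the hypothesis, and then invoke the elementary fact that if $\delta(K)=1$ and $\delta(J)=0$ then $\{k_j : j \in J\}$ has density zero (which follows from $k_j \sim j$); I prefer the auxiliary-sequence formulation since it sidesteps explicit density estimates.
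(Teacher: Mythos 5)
Your proposal is correct, and it follows the same overall skeleton as the paper's proof: three inclusions, with the subsequence characterisation of statistical convergence doing the work in the one nontrivial step $\mathcal{C}_{st}^{n}(U,V)\subseteq\mathcal{C}_{st}(U,V)$. The genuine difference is how that step is finished. The paper applies $n$-$st$-continuity to the reindexed subsequence $(x_{k_n})_{k_n\in K}$ and then simply asserts that $S(x_{k_n})\stc\theta$ ``means'' $S(x_n)\stc\theta$, leaving unaddressed the passage from statistical convergence under the subsequence indexing back to the original indexing (this needs an argument, e.g.\ that a density-zero set $J$ of subsequence indices has density-zero image $\{k_j:j\in J\}$, which follows from $k_j\ge j$). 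Your auxiliary sequence $(z_n)$ --- equal to $x_n$ on $K$ and to $\theta$ off $K$ --- sidesteps this entirely: it is a genuine norm-null sequence to which the hypothesis applies directly, and the density-zero insensitivity lemma transports the conclusion back to $(x_n)$. So your write-up is actually tighter than the paper's at precisely the point where the paper glosses. Two minor remarks: your appeal to linearity in identifying $S(z_n)=S(x_n)$ for $n\in K$ is unnecessary, since $z_n=x_n$ there makes the identity tautological; and you route the first inclusion through $\mathcal{C}(U,V)\subseteq\mathcal{C}_{st}^{n}(U,V)$ plus the proved equality, whereas the paper proves $\mathcal{C}(U,V)\subseteq\mathcal{C}_{st}(U,V)$ directly from the bound $\lVert Sx_n\rVert\le\lVert S\rVert\,\lVert x_n\rVert$ and a density inclusion --- both are fine.
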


\begin{proof}
	Let $S\in \mathcal{C}(U,V)$ and $(x_n)$ be a sequence in $U$ such that $x_{n}\stc \theta$. Then, for every $\varepsilon >0$ we have $\delta(\left\{ k\leq n:\left\Vert x_{k}\right\Vert \geq\varepsilon \right\})=0$, and $\left\Vert Sx\right\Vert \leq\left\Vert S\right\Vert \left\Vert x\right\Vert $ holds for every $x\in U$.	Hence, the inequality $\left\Vert Sx_{n}\right\Vert\leq \left\Vert S\right\Vert \left\Vert x_{n}\right\Vert$ holds for each $n\in\mathbb{N}$, which implies the following inclusion
	\begin{equation*}
		\left\{n\in \mathbb{N}:\left\Vert Sx_{n}\right\Vert \geq \left\Vert S\right\Vert \varepsilon\right\} \subseteq \left\{ n\in \mathbb{N}:\left\Vert x_{n}\right\Vert \geq \varepsilon \right\}
	\end{equation*}
	Hence, we have
	\begin{equation*}
		\delta \left( \left\{ n\in\mathbb{N}:\left\Vert Sx_{n}\right\Vert \geq \left\Vert S\right\Vert \varepsilon\right\} \right) \leq\delta\left(\left\{n\in\mathbb{N}:\left\Vert x_{n}\right\Vert \geq \varepsilon \right\}\right)=0
	\end{equation*}
	for every $\varepsilon >0$. This implies that $Sx_{n}\stc \theta$, i.e. we obtain $S\in \mathcal{C}_{st}(U,V)$. Consequently, we have $\mathcal{C}(U,V)\subseteq\mathcal{C}_{st}(U,V)$.
	
	Let $S\in \mathcal{C}_{st}^{n}(U,V)$ and $(x_n)$ be a sequence in $U$ such that $x_{n}\stc \theta$. Then, there exists a subsequence $(x_{k_n})_{k_n\in K}$ with $\delta(K)=1$ such that $(x_{k_n})$ is norm convergent to $\theta$ in $U$. Now, by using the norm statistically continuity of $S$, we obtain $S(x_{k_n})\stc \theta$ in $V$. It means that $(S(x_n))$ is statistical convergent to $\theta$ in $V$. Therefore, $S$ is a statistically continuous operator, and so we have $\mathcal{C}_{st}^{n}(U,V)\subseteq\mathcal{C}_{st}(U,V)$. 
	
	Now, take any operator $S\in \mathcal{C}_{st}(U,V)$ and an arbitrary sequence $(x_n)$ in $U$ such that $\|x_{n}\|\to 0$. Evidently, $(x_{n})$ is statistical covergent to $\theta$. Then, $\left( Sx_{n}\right) $ should be statistical convergent to $\theta$. That is, the operator $S$ sends norm convergent sequences to statistical convergent sequences. This means that $S\in \mathcal{C}_{st}^{n}(U,V)$, and so $\mathcal{C}_{st}(U,V)\subseteq\mathcal{C}_{st}^{n}(U,V)$ holds.
\end{proof}

It is important to recognize that the opposite containment stated in Proposition \ref{inclusion} doesn't hold universally.
\begin{theorem} \label{linearity implies equality}
	A linear operator $S:U\to V$ is $st$-bounded if and only if it is $st$-continuous, i.e. $\mathcal{B}_{st}(U,V) \cap \mathcal{L}(U,V)=\mathcal{C}_{st}(U,V)$.
\end{theorem}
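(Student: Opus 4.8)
The plan is to establish the asserted identity $\mathcal{B}_{st}(U,V)\cap\mathcal{L}(U,V)=\mathcal{C}_{st}(U,V)$ by proving the two inclusions separately, i.e.\ by showing that for a linear operator $S$, the properties of being $st$-bounded and of being $st$-continuous are equivalent. The forward inclusion will be a short deduction from Theorem \ref{M}; the reverse inclusion is where the real work lies.

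For the implication that $st$-boundedness yields $st$-continuity, I would take a sequence $(x_n)$ with $x_n\stc\theta$ and first note that it is automatically statistically bounded (choosing $\varepsilon=1$ shows $\|x_n\|<1$ for almost all $n$). Theorem \ref{M} then furnishes a scalar $M>0$ with $\|S(x_n)\|\le M\|x_n\|$ for almost all $n$. The rest is a routine density manipulation: for fixed $\varepsilon>0$ the set $\{n:\|S(x_n)\|\ge\varepsilon\}$ is contained in the union of $\{n:\|x_n\|\ge\varepsilon/M\}$ with the density-zero exceptional set on which $\|S(x_n)\|\le M\|x_n\|$ fails. Both pieces have density zero, so $\delta(\{n:\|S(x_n)\|\ge\varepsilon\})=0$ and $S(x_n)\stc\theta$, giving $\mathcal{B}_{st}(U,V)\cap\mathcal{L}(U,V)\subseteq\mathcal{C}_{st}(U,V)$.

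For the reverse inclusion I would argue by contraposition: assuming $S$ is linear but \emph{not} $st$-bounded, I will construct a single sequence that is norm null (hence statistically null) whose image fails even to be statistically bounded, contradicting $st$-continuity. By hypothesis there is a statistically bounded $(x_n)$, say with $\|x_n\|\le M_0$ on a set $K$ of density one, whose image $(S(x_n))$ is not statistically bounded; this forces $\|S(x_n)\|$ to be unbounded along $K$, since otherwise $\{n:\|S(x_n)\|>M\}$ would sit inside the density-zero set $\mathbb{N}\setminus K$. Hence for each $k$ I may pick $n_k\in K$ with $\|S(x_{n_k})\|>k$, and pass to the \emph{new} sequence $u_k:=\|S(x_{n_k})\|^{-1/2}\,x_{n_k}$. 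Linearity then gives $\|S(u_k)\|=\sqrt{\|S(x_{n_k})\|}>\sqrt{k}\to\infty$ while $\|u_k\|\le M_0/\sqrt{k}\to0$, so $u_k\to\theta$ in norm yet $S(u_k)\not\stc\theta$, the required contradiction.

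The delicate point, and the step I expect to be the main obstacle, is precisely this converse direction: one must resist rescaling the witness sequence $(x_n)$ in place. Keeping the original index set would require the large values of $\|S(x_n)\|$ to lie on a set of positive density, which can fail when those values, though unbounded, are distributed along a density-zero subset, and the scaling factors needed to make $(u_n)$ statistically null would then also suppress its image. Re-indexing to the brand-new sequence $(u_k)$ circumvents this entirely, since I only need $\|S(x_{n_k})\|\to\infty$ along a subsequence — which unboundedness supplies — and the failure of $(S(u_k))$ to converge statistically then occurs at density one by construction.
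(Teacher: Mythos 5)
Your proposal is correct, and its skeleton matches the paper's: both implications are proved separately, and the forward direction is the same deduction from Theorem~\ref{M} followed by a density inclusion (you are in fact slightly more careful than the paper, which writes $\{n:\|Sx_n\|\geq\varepsilon\}\subseteq\{n:\|x_n\|\geq\varepsilon/M\}$ without accounting for the density-zero exceptional set on which the inequality $\|Sx_n\|\le M\|x_n\|$ may fail). The converse is where you genuinely diverge. The paper negates the characterization of Theorem~\ref{M}, asserting a witness $(x_n)\in\ell_{\infty}^{st}(U)$ with $\delta\left(\{n:\|Sx_n\|>M\|x_n\|\}\right)>0$ for every $M>0$ --- strictly speaking an overstatement, since the negation of ``density zero'' only yields that the density is nonzero or fails to exist --- and then normalizes to $\|x_{k_m}\|=1$ and scales by $1/m$ to get $\|Sz_m\|\geq m$ with $\|z_m\|\to 0$. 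You instead negate the definition of $st$-boundedness directly: from the density-one set $K$ on which $\|x_n\|\le M_0$ you extract indices $n_k\in K$ with $\|S(x_{n_k})\|>k$ (correctly observing that boundedness of the image along $K$ would force statistical boundedness of the image), and the rescaling $u_k=\|S(x_{n_k})\|^{-1/2}x_{n_k}$ makes $(u_k)$ norm-null while $\|S(u_k)\|>\sqrt{k}$. Both routes reach the same contradiction, namely a statistically null sequence whose image is not statistically null; what yours buys is that it never needs the paper's imprecise positive-density assertion (unboundedness along $K$ is exactly what the negated definition provides) and it dispenses with the normalization step, while the paper's route buys symmetry, using Theorem~\ref{M} as the single pivot for both directions.
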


\begin{proof}
	Let $S:U\to V$ be a linear $st$-bounded operator. Consider the sequence $(x_{n})$ in $U$ such that $x_{n}\stc \theta$. Clearly, $(x_{n})$ is $st$-bounded, and so it is follows from Theorem \ref{M} that there exists a scalar $M>0$ such that the inequality
	$$
	\|S(x_n)\|\leq M\|x_n\|
	$$
	holds. Therefore, we have 
	$$
	\left\{ n:\lVert Sx_{n}\rVert \geq \varepsilon \right\} \subseteq  \left\{ n:\lVert x_{n}\rVert \geq \frac{\varepsilon}{M} \right\}
	$$
	for every $\varepsilon >0$. Hence, the fact that $x_{n}\stc \theta$ implies $Sx_{n}\stc \theta$. This means that the operator $S$ is a $st$-continuous operator.     
	
	Conversely, let $S:U\to V$ be a $st$-continuous operator. Suppose that $S$ is not a $st$-bounded operator. Thus, by using Theorem \ref{M}, there exist some sequences $(x_{n})\in \ell_{\infty}^{st}(U)$ such that we have   
	$$
	\delta \left( \left\{ n:\lVert Sx_{n}\rVert>M\lVert x_{n}\rVert \right\}\right) > 0
	$$
	for every $M>0$. By choosing $M=m^2$ for each $m \in \mathbb{N}$, we construct a subsequence $(x_{k_m})$ such that $x_{k_m}$ is a member of $(x_n)$ satisfying $\lVert Sx_{k_{m}}\rVert>m^{2}\lVert x_{k_{m}}\rVert$. Without loss of the generality, we can assume $\lVert x_{k_{m}} \rVert =1$ for all $m$. Otherwise, we can consider the initial sequence $(x_n)$ as $\frac{x_n}{\|x_n\|}$. Define a sequence $(z_{m})=(\frac{1}{m} x_{k_{m}})$. Obviously, $(z_m) \in \ell^{st}_{\infty}(U)$ and $\|z_m\|\to0$. Other hand, for every $m \in \mathbb{N}$, we have
	$$
	\lVert Sz_{m}\rVert=\lVert S(\frac{1}{m} x_{k_{m}})\rVert = \frac{1}{m} \lVert S(x_{k_{m}})\rVert \geq \frac{1}{m} m^2=m.
	$$       
	It follows that $\delta \left( \left\{ k:\lVert Sz_m\rVert \geq \varepsilon \right\} \right) =1$ for every $\varepsilon >0$, i.e. the sequence $(Sz_{k})$ is not $st$-convergent to $\theta$. Hence, $S$ is not $n$-$st$-continuous and so is not $st$-continuous by Theorem \ref{inclusion}, which is a contradiction.  
\end{proof}

\begin{theorem}
	$\mathcal{C}_{st}\left( U,V\right)$ is a linear subspaces of $\mathcal{L}(U,V)$.
\end{theorem}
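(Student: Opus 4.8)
The plan is to exploit the identification $\mathcal{C}_{st}(U,V) = \mathcal{B}_{st}(U,V) \cap \mathcal{L}(U,V)$ furnished by Theorem \ref{linearity implies equality}, which collapses the claim into an intersection argument. Since we have already shown that $\mathcal{B}_{st}(U,V)$ is a linear subspace of $L(U,V)$, and since $\mathcal{L}(U,V)$ is itself a linear subspace of $L(U,V)$ (sums and scalar multiples of linear operators remain linear), the conclusion follows at once from the elementary fact that the intersection of two linear subspaces of a common vector space is again a linear subspace. This is the most economical route, and I would present it first.

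As a self-contained alternative, I would verify closure directly from the definition of $st$-continuity. Let $S, T \in \mathcal{C}_{st}(U,V)$ and take any sequence $(x_n)$ in $U$ with $x_n \stc \theta$; by hypothesis both $S(x_n) \stc \theta$ and $T(x_n) \stc \theta$. To obtain $(S+T)(x_n) = S(x_n) + T(x_n) \stc \theta$, I would combine the triangle inequality with the $\varepsilon/2$-splitting of index sets, precisely as in the proof that $\mathcal{B}_{st}(U,V)$ is a subspace: for each $\varepsilon > 0$,
\[
\left\{ n : \|(S+T)x_n\| \geq \varepsilon \right\} \subseteq \left\{ n : \|Sx_n\| \geq \tfrac{\varepsilon}{2} \right\} \cup \left\{ n : \|Tx_n\| \geq \tfrac{\varepsilon}{2} \right\},
\]
so that monotonicity and subadditivity of the natural density force the left-hand set to have density zero. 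For scalar multiplication, the identity $\|(\alpha S)x_n\| = |\alpha|\,\|Sx_n\|$ shows that $\{ n : \|(\alpha S)x_n\| \geq \varepsilon \}$ coincides with $\{ n : \|Sx_n\| \geq \varepsilon/|\alpha| \}$ when $\alpha \neq 0$ (and the case $\alpha = 0$ is trivial), so $st$-convergence to $\theta$ is preserved. Since $S+T$ and $\alpha S$ are again linear, both lie in $\mathcal{L}(U,V)$, completing membership in $\mathcal{C}_{st}(U,V)$.

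I do not anticipate a genuine obstacle: the statement is essentially a corollary of the earlier results. The only point demanding slight care is the linearity of statistical convergence to $\theta$—its closure under sums and scalar multiples—which is exactly the density estimate displayed above. Invoking Theorem \ref{linearity implies equality} renders even this computation unnecessary, so I would lead with the intersection argument and, if desired, record the direct verification as a secondary remark.
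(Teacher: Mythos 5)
Your proposal is correct, and your primary route is genuinely different from the paper's. The paper proves the statement by direct verification: it takes $S,T\in\mathcal{C}_{st}(U,V)$, a sequence $x_n\stc x$, and runs the $\varepsilon/2$-splitting of index sets together with monotonicity of natural density to get $(S+T)x_n\stc(S+T)x$, then handles $\alpha S$ via the identity $\|(\alpha S)x_n-(\alpha S)x\|=|\alpha|\,\|Sx_n-Tx\dots\|$ --- exactly your secondary, self-contained argument (the only cosmetic difference being that the paper works with an arbitrary statistical limit $x$, while you work with $\theta$, which is all the definition of $st$-continuity requires; for linear operators the two are equivalent). Your lead argument instead composes three earlier facts: Theorem \ref{linearity implies equality} ($\mathcal{C}_{st}(U,V)=\mathcal{B}_{st}(U,V)\cap\mathcal{L}(U,V)$), the Section 2 theorem that $\mathcal{B}_{st}(U,V)$ is a linear subspace of $L(U,V)$, and the elementary fact that an intersection of subspaces is a subspace. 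This is legitimate and non-circular, since both cited results precede the present theorem in the paper, and it buys brevity: the density estimate is done once (in the $\mathcal{B}_{st}$ subspace theorem) rather than repeated. What the paper's direct proof buys in exchange is independence from Theorem \ref{linearity implies equality}, so the subspace property of $\mathcal{C}_{st}(U,V)$ stands even where that equivalence is not available (e.g.\ if one later weakens the linearity hypotheses under which the bounded--continuous equivalence is proved), and it establishes the slightly stronger fact that statistical limits at arbitrary points are preserved under sums and scalar multiples. The one point worth flagging in your intersection argument is that ``linear subspace'' requires nonemptiness (the zero operator), which the Section 2 theorem, like your direct verification, only implicitly addresses; this is trivial to patch, since the zero operator is plainly $st$-bounded and $st$-continuous.
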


\begin{proof}
	Let $S,T\in \mathcal{C}_{st}\left( U,V\right)$ and $(x_n)$ be a sequence in $U$ such that $x_{n}\stc x\in U$. Then, for every $\varepsilon >0$, the following facts 
	\begin{equation*}
		\delta \left( \left\{ k\leq n:\lVert Tx_{k}-Tx\rVert \geq \frac{\varepsilon}{2}\right\} \right) =0\text{ \ \ and \ \ }\delta \left( \left\{ k\leq n:\lVert Sx_{k}-Sx\rVert \geq \frac{\varepsilon }{2}\right\} \right) =0.
	\end{equation*}
	provide. On the other hand, for every $n\in\mathbb{N}$, the inequality $\left\Vert \left( S+T\right) x_{n}-\left( S+T\right)x\right\Vert \leq \left\Vert Sx_{n}-Sx\right\Vert +\left\Vert Tx_{n}-Tx\right\Vert $ gives the following inclusion%
	\begin{equation*}
		\left\{ k\leq n:\lVert \left( S+T\right) x_{k}-\left( S+T\right) x\rVert\geq \varepsilon \right\} \subseteq \left\{ k\leq n:\lVert Sx_{k}-Sx\rVert\geq \frac{\varepsilon }{2}\right\} \cup \left\{ k\leq n:\lVert Tx_{k}-Tx\rVert \geq \frac{\varepsilon }{2}\right\} .
	\end{equation*}
	Thus, the monotonicity of the natural density implies $\delta \left( \left\{k\in\mathbb{N}:\lVert \left( S+T\right) x_{k}-\left( S+T\right) x\rVert \geq \varepsilon\right\} \right)=0$, indicating $\left( S+T\right) x_{n}\stc\left(S+T\right) x$ in $V$. Therefore, $S+T\in \mathcal{C}_{st}(U,V)$ holds.
	
	Now, we want to prove $\alpha T\in \mathcal{C}_{st}(U,V)$ for every $\alpha \in \mathbb{R}$. Fix any non zero scalar $\alpha$. Then, we	have $\left\Vert \left( \alpha T\right) x_{n}-\left( \alpha T\right)	x\right\Vert =\left\Vert \alpha Tx_{n}-\left( \alpha T\right) x\right\Vert=\left\vert \alpha \right\vert \left\Vert Tx_{n}-Tx\right\Vert$. Then, we obtain
	\begin{equation*}
		\left\{ k\leq n:\lVert \left( \alpha T\right) x_{n}-\left( \alpha T\right)x\rVert >\left\vert \alpha \right\vert \frac{\varepsilon }{2}\right\}\subseteq \left\{ n\in\mathbb{N}:\lVert Tx_{n}-Tx\rVert >\frac{\varepsilon }{2}\right\}.
	\end{equation*}
	It follows from $\delta \left( \left\{n\in\mathbb{N}:\lVert \left( \alpha T\right) x_{n}-\left( \alpha T\right) x\rVert>\left\vert \alpha \right\vert \frac{\varepsilon }{2}\right\} \right)=0$ for every $\varepsilon >0$ that $\left( \alpha T\right) x_{n}\stc\left(\alpha T\right) x$ in $Y$. Therefore, $\alpha T\in \mathcal{C}_{st}\left(U,V\right)$ holds.
\end{proof}

For a linear operator $S:U\rightarrow V$ between two vector spaces $U$ and $V$, its adjoint $S^*: V^*\rightarrow U^*$ is a linear operator between algebraic duals and is defined by $S^*(f)=f(Sx)$ for all $f\in V^*$ and $x \in U$, and it maintains the property $\|S^*\|=\|S\|$.
\begin{remark}
	As is well known, a linear operator $S:U \rightarrow V$ is bounded if and only if its adjoint operator $S^{\prime }: V^{\prime } \rightarrow U^{\prime }$ is bounded. Similarly, by considering Lemma \ref{equality}, a linear operator on normed spaces $S$ is $st$-bounded if and only if its adjoint operator $S^{\prime}$ is $st$-bounded. The situation for the $st$-continuous operators is the same by Proposition \ref{inclusion}.       	
\end{remark}

\section{Statistical compact-like operators}\label{Sec:4}
Consider an operator $S:U\to V$ between two normed spaces. If $S$ maps the closed unit ball $U$ of $U$ to a subset of $V$ that is relatively compact in terms of its norm i.e., $\overline{S(U)}$ is a compact subset of $V$, then $S$ is termed as a compact operator. In other words, $S$ is compact if, for every sequence $(x_n)$ in $U$ that is bounded in norm, there exists a subsequence $(x_{n_k})$ such that the sequence $S(x_{n_k})$ converges in $V$. With this in mind, we introduce the following concepts.
\begin{definition}
	A linear operator $S:U\to V$ is called {\em statistical compact} (or {\em $st$-compact}) operator if it sends $st$-bounded sequences to $st$-convergent sequences.
\end{definition}

We denote $\mathcal{K}_{st}(U,V)$ collection of all $st$-compact operators from $U$ to $V$.
\begin{remark}\
	\begin{enumerate}[(i)]
		\item Every $st$-compact operator is norm compact. But, the converse need not be true in general.
		\item Let $S:U \rightarrow V$ be a mapping between two Banach spaces. $S$ is {\em weakly compact} if every norm-bounded sequence $(x_n)$ in $U$ has a subsequence $(x_{k_n})$ such that the sequence $S(x_{k_n})$ converges weakly in $V$. Every $st$-compact operator is weakly compact because $st$-convergence implies weakly convergence on normed spaces; see \cite[Thm.2.3(i)]{BB}.
		\item It follows from \cite[Thm.2.3(iii)]{BB} that weakly convergence implies $st$-convergence. However, weakly compactness does not imply $st$-compactness on finite normed spaces in general.
	\end{enumerate}
\end{remark}

\begin{theorem}
	Every $st$-compact operator is $st$-bounded.
\end{theorem}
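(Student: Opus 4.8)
The plan is to reduce the statement to the elementary and already-recorded fact that every statistically convergent sequence in a normed space is statistically bounded. Concretely, I would fix an operator $S \in \mathcal{K}_{st}(U,V)$ and an arbitrary $st$-bounded sequence $(x_n) \in \ell_{\infty}^{st}(U)$, and aim to show that the image sequence $(S(x_n))$ lies in $\ell_{\infty}^{st}(V)$. Once this is done for an arbitrary $(x_n) \in \ell_{\infty}^{st}(U)$, we obtain $S(\ell_{\infty}^{st}(U)) \subseteq \ell_{\infty}^{st}(V)$, which is exactly the definition of $st$-boundedness.

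The key step is to apply the definition of $st$-compactness: since $(x_n)$ is $st$-bounded and $S$ is $st$-compact, the sequence $(S(x_n))$ is $st$-convergent in $V$, say $S(x_n) \stc y$ for some $y \in V$. It then remains to verify that an $st$-convergent sequence is $st$-bounded. This is the substantive content, and it follows from a short natural-density bookkeeping argument: taking $\varepsilon = 1$ in the definition of $st$-convergence gives $\delta(\{n : \|S(x_n) - y\| \geq 1\}) = 0$, so $\|S(x_n) - y\| < 1$ for almost all $n$, whence the triangle inequality yields $\|S(x_n)\| \leq \|S(x_n) - y\| + \|y\| < 1 + \|y\|$ for almost all $n$. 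Setting $M := 1 + \|y\| > 0$, we get $\delta(\{n \in \mathbb{N} : \|S(x_n)\| > M\}) = 0$, i.e. $(S(x_n)) \in \ell_{\infty}^{st}(V)$.

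This argument is self-contained, but I would note that the implication ``$st$-convergent $\Rightarrow$ $st$-bounded'' is already acknowledged in the preliminaries, so one may simply invoke it rather than reproving it inline. Since no real obstacle arises, the only point demanding care is the density bookkeeping: one must ensure that the negligible exceptional set on which $\|S(x_n) - y\| \geq 1$ is the same set outside of which the bound $\|S(x_n)\| \leq M$ holds, which is immediate from the monotonicity of the natural density. Putting these observations together completes the proof that $\mathcal{K}_{st}(U,V) \subseteq \mathcal{B}_{st}(U,V)$.
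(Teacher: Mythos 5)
Your proof is correct and follows essentially the same route as the paper: apply the definition of $st$-compactness to an arbitrary $st$-bounded sequence to get an $st$-convergent image, then invoke the fact that every $st$-convergent sequence is $st$-bounded. The only difference is that you spell out the short $\varepsilon = 1$ triangle-inequality argument for this last fact, whereas the paper simply cites it as known.
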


\begin{proof}
	Assume that $S\in \mathcal{K}_{st}(U,V)$. Take a $st$-bounded sequence $(x_n)$ in $U$. Then, $(S(x_n))$ is $st$-convergent in $V$, and so $S(x_n)$ is also $st$-bounded in $V$ because every $st$-convergent sequence is $st$-bounded. Hence, we get the desired result.
\end{proof}

Note that it is well known that an identity operator $I_U$ on a Banach space $U$ is compact if and only if $U$ is finite dimensional. However, this statement is not true for statistical compact operators because a $st$-bounded sequence need not be $st$-convergent.

\begin{theorem}
	Every $st$-compact operator is $st$-continuous.
\end{theorem}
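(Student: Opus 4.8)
The plan is to obtain this statement as an immediate consequence of the two facts already in hand, rather than arguing from scratch. By definition an $st$-compact operator is linear, and by the foregoing theorem (every $st$-compact operator is $st$-bounded) it lies in $\mathcal{B}_{st}(U,V)$. Since Theorem \ref{linearity implies equality} establishes that a linear operator is $st$-bounded if and only if it is $st$-continuous, the conclusion follows at once. Thus the cleanest route is simply the chain ``$st$-compact $\Rightarrow$ linear and $st$-bounded $\Rightarrow$ $st$-continuous,'' and I would present this short deduction as the primary proof.

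For completeness I would also indicate the underlying direct argument, which is essentially the forward implication inside Theorem \ref{linearity implies equality}. Let $S$ be $st$-compact and take a sequence $(x_n)$ in $U$ with $x_n \stc \theta$. Since every $st$-convergent sequence is $st$-bounded, $(x_n) \in \ell_{\infty}^{st}(U)$. Because $S$ is $st$-bounded, Theorem \ref{M} supplies a scalar $M>0$ with $\|S(x_n)\| \le M\|x_n\|$ for almost all $n$. This yields the inclusion $\{n : \|S(x_n)\| \ge \varepsilon\} \subseteq \{n : \|x_n\| \ge \varepsilon/M\}$ for every $\varepsilon>0$, and the monotonicity of natural density then forces $S(x_n) \stc \theta$, establishing $st$-continuity.

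The main obstacle—should one insist on arguing purely from the compactness definition without invoking $st$-boundedness—is identifying the limit of $(S(x_n))$. The definition of $st$-compactness guarantees only that $(S(x_n))$ is $st$-convergent to \emph{some} vector, not that this vector is $\theta$; pinning the limit down as $S(\theta)=\theta$ is exactly what linearity together with the norm estimate of Theorem \ref{M} delivers. Routing the proof through the established equivalence $\mathcal{B}_{st}(U,V)\cap\mathcal{L}(U,V)=\mathcal{C}_{st}(U,V)$ sidesteps this difficulty entirely, so I expect the proof to be essentially a one-line citation of the preceding results.
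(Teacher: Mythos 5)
Your proposal is correct, and there is no circularity in it: the paper's definition of $st$-compactness already requires linearity, the immediately preceding theorem supplies $st$-boundedness, and Theorem \ref{linearity implies equality} is proved in Section 3 independently of any compactness material, so the chain ``$st$-compact $\Rightarrow$ linear and $st$-bounded $\Rightarrow$ $st$-continuous'' is a legitimate one-line deduction. This is, however, a genuinely different route from the paper's. The paper argues by contradiction directly from the definition: assuming $x_n \stc \theta$ while $S(x_n)$ does not $st$-converge to $\theta$, it splits into two cases (either $S(x_n)$ has no statistical limit at all, or some subsequence $st$-converges to some $y \neq \theta$) and derives a contradiction in each. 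Your route is not only shorter but arguably sounder: in its second case the paper invokes the estimate $\|S(x_n) - S(\theta)\| \leq \|S\|\,\|x_n - \theta\|$, which presupposes that $S$ has a finite operator norm, something an $st$-bounded operator need not possess --- as the paper itself emphasizes in the remark and example following Lemma \ref{equality}. Your primary argument bypasses this issue entirely by leaning on results whose proofs do not use $\|S\|$, and your fallback direct argument (Theorem \ref{M} together with the inclusion $\{n : \|S(x_n)\| \geq \varepsilon\} \subseteq \{n : \|x_n\| \geq \varepsilon/M\}$) is precisely the sound forward half of Theorem \ref{linearity implies equality}, including the correct observation that linearity is what pins the statistical limit down to $\theta$. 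In short, your approach buys brevity and robustness at no cost beyond citing earlier sections, whereas the paper's approach attempts self-containedness within Section 4 but pays for it with a questionable norm estimate.
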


\begin{proof}
	Assume $S$ is $st$-compact operator, but not $st$-continuous. This means there exists a sequence $(x_n)$ in $U$ such that it is $st$-convergent to $\theta$ in $U$, but $S(x_n)$ is not $st$-convergent to $\theta$ in $V$. Since $(x_n)$ is $st$-convergent to $\theta$, it is also statistically bounded. Now, let's consider the following two cases:
	
	\textit{Case 1:} Suppose that $S(x_n)$ is not statistically convergent to $\theta$ in $V$. In this case, for any positive real number $\varepsilon > 0$, there exists a set $A_\varepsilon \subseteq\mathbb{N}$ with $\delta(A_\varepsilon)\neq0$ such that $\|S(x_n)-\theta\| \geq \varepsilon$ holds for all $n \in A_\varepsilon$. On the other hand, since $(x_n)$ is statistically bounded, we can extract a subsequence $(x_{k_n})$ of $(x_n)$ such that $\delta(A_\varepsilon \cap \{k_n : n \in \mathbb{N}\})\neq 0$. This means there exists a set $B_\varepsilon \subseteq \mathbb{N}$ with $\delta(B_\varepsilon)\neq0$ such that for all $k \in B_\varepsilon$, $\|S(x_{k_n}) - \theta\| \geq \varepsilon$. Therefore, $(S(x_{k_n}))$ cannot be $st$-convergent to $\theta$ in $V$, which contradicts the assumption that every subsequence of a $st$-bounded sequence has a $st$-convergent subsequence image under $S$.
	
	\textit{Case 2:} Assume that $S(x_n)$ has a subsequence $(S(x_{k_n}))$ which is $st$-convergent to some $y \in V$, but different from $\theta$. Then, for each positive real number $\varepsilon > 0$, we have $\delta(\{n \in \mathbb{N}:\|S(x_{k_n}) - y\|\geq \varepsilon\}) = 0$. This implies that for any $\varepsilon > 0$, almost all terms of the subsequence $(S(x_{n_k}))$ are within a distance of $\varepsilon$ from $y$. However, since $(x_n)$ is $st$-convergent to $\theta$, for the same $\varepsilon > 0$ there exists a set $C_\varepsilon \subseteq \mathbb{N}$ with $\delta(C_\varepsilon) > 0$ such that for all $n \in C_\varepsilon$, we have $\|x_n - \theta\| < \varepsilon$. It follows from the linearity of $S$ that we have:
	$$
	\|S(x_n) - S(\theta)\| =\|S(x_n - \theta)\| \leq \|S\| \ \|x_n - \theta\|.
	$$
	Therefore, for all $n \in C_\varepsilon$, we get $\|S(x_n) - S(\theta)\|<\varepsilon\|S\|$. This contradicts the fact that for almost all $k \in \mathbb{N}$, $\|S(x_{n_k}) - y\| \geq \varepsilon$.
	
	In both cases, we arrive at a contradiction. Therefore, the original assumption that $S$ is not $st$-continuous must be false. This concludes the proof that every $st$-compact operator is $st$-continuous.
\end{proof}

\begin{theorem}
	If $(S_m)$ is a sequence of norm continuous $st$-compact operator from $U$ to $V$, where $V$ is Banach space, and $S_m\nc S$, then $S$ is $st$-compact. 
\end{theorem}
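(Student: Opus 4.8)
The plan is to adapt the classical argument that an operator-norm limit of compact operators is compact, while exploiting a feature special to the statistical setting: since each $S_m$ is $st$-compact, it maps the whole $st$-bounded sequence to a genuinely $st$-convergent sequence, so no diagonal subsequence extraction is required. First I would fix an $st$-bounded sequence $(x_n)$ in $U$; by definition there is an $M>0$ with $\|x_n\|\le M$ for almost all $n$. Because each $S_m$ is $st$-compact and $(x_n)$ is $st$-bounded, each image sequence $(S_m x_n)_n$ is $st$-convergent in $V$; denote its $st$-limit by $y_m$ (here $\|S-S_m\|$ makes sense because $S_m\nc S$ in operator norm forces $S$ to be a bounded linear operator). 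The goal is to produce a single $y\in V$ with $S x_n \stc y$.

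The second step is to show that the sequence of $st$-limits $(y_m)$ is Cauchy in $V$. Fix $p,q$ and $\varepsilon>0$. For almost all $n$ the three conditions $\|x_n\|\le M$, $\|S_p x_n - y_p\|<\varepsilon$ and $\|S_q x_n - y_q\|<\varepsilon$ hold simultaneously, since each fails only on a set of density zero and a finite union of such sets again has density zero. Choosing any such $n$ and combining the triangle inequality with $\|S_p x_n - S_q x_n\|\le \|S_p-S_q\|\,\|x_n\|\le \|S_p-S_q\|\,M$ yields $\|y_p-y_q\|\le 2\varepsilon + \|S_p-S_q\|\,M$; letting $\varepsilon\to 0$ gives $\|y_p-y_q\|\le \|S_p-S_q\|\,M$. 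Since $S_m\nc S$, the operators $(S_m)$ are Cauchy in operator norm, hence $(y_m)$ is Cauchy, and because $V$ is a Banach space it converges to some $y\in V$.

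The final step is to verify $S x_n \stc y$. Given $\varepsilon>0$, use operator-norm convergence and $y_m\to y$ to fix an index $m$ with $\|S-S_m\|\,M<\varepsilon/3$ and $\|y_m-y\|<\varepsilon/3$. For this fixed $m$ the triangle inequality gives, whenever $\|x_n\|\le M$, the bound $\|S x_n - y\|\le \|S-S_m\|\,M + \|S_m x_n - y_m\| + \|y_m-y\|$, so $\|S x_n - y\|<\varepsilon$ holds as soon as additionally $\|S_m x_n - y_m\|<\varepsilon/3$. This produces the inclusion
\[
\{n:\|S x_n - y\|\ge \varepsilon\}\subseteq \{n:\|x_n\|>M\}\cup\{n:\|S_m x_n - y_m\|\ge \varepsilon/3\},
\]
whose right-hand side has density zero (the first set because $(x_n)$ is $st$-bounded, the second because $S_m x_n \stc y_m$). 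Hence $\delta(\{n:\|S x_n - y\|\ge\varepsilon\})=0$ for every $\varepsilon>0$, i.e. $S x_n \stc y$, proving that $S$ is $st$-compact.

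I expect the main obstacle to be bookkeeping rather than conceptual: one must repeatedly convert several ``for almost all $n$'' statements into a single one, which relies on the fact that finite intersections of density-one sets are again density-one (equivalently, finite unions of density-zero sets have density zero). The two genuinely essential hypotheses are the completeness of $V$, needed to promote the Cauchy sequence $(y_m)$ to an actual limit $y$, and the operator-norm convergence $S_m\nc S$, needed to make the tail term $\|S-S_m\|\,M$ uniformly small in $n$; without either, the argument breaks down.
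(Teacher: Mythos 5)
Your proof is correct, but it takes a genuinely different route from the paper's. The paper begins with a diagonal argument: it extracts a single subsequence $(x_{n_k})_{n_k\in K}$ with $\delta(K)=1$ along which, for \emph{every} $m$ simultaneously, $S_m x_{n_k}$ converges in norm to some $y_m$; it then shows $(y_m)$ is Cauchy, obtains $y$ by completeness, proves $\|Sx_{n_k}-y\|\to 0$ along the subsequence, and concludes statistical convergence of $(Sx_n)$ from norm convergence on a density-one set. You never extract a subsequence at all: you exploit the fact that the paper's definition of $st$-compactness already gives $st$-convergence of the \emph{whole} image sequence $(S_m x_n)_n$ to a limit $y_m$, prove $\|y_p-y_q\|\le \|S_p-S_q\|M$ by intersecting only finitely many density-one sets (which is unproblematic), and finish with the explicit inclusion $\{n:\|Sx_n-y\|\ge\varepsilon\}\subseteq\{n:\|x_n\|>M\}\cup\{n:\|S_m x_n-y_m\|\ge\varepsilon/3\}$ of density-zero sets. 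Your version buys a real simplification: the delicate point of the paper's argument is precisely the diagonal step, since a countable intersection of density-one sets need not have density one (the paper invokes ``a standard diagonal argument'' without detail), whereas your proof only ever uses that finite unions of density-zero sets have density zero, and it delivers the statistical convergence of $(Sx_n)$ directly rather than via convergence along a density-one subsequence. Both arguments use the same two essential hypotheses you identify: completeness of $V$ to pass from the Cauchy sequence $(y_m)$ to its limit, and operator-norm convergence $S_m\nc S$ to make the term $\|S-S_m\|\,M$ small uniformly in $n$.
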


\begin{proof}
	Let $(x_n)$ denote a sequence in $U$ that is $st$-bounded. This implies that there exists a positive number $M$ such that the norm of $x_n$ is less than or equal to $M$ for almost all $n$ in the set of natural numbers. By utilizing a standard diagonal argument, we can establish the existence of a subsequence $(x_{n_k})_{n_k\in K}$ with $\delta(K)=1$. This subsequence has the property that for any natural number $m$, the image of $(x_{n_k})$ under the transformation $S_m$ norm converge to a limit $y_m$ in $V$. We aim to demonstrate that the sequence $(y_m)$ forms a $st$-Cauchy sequence in $V$.
	\begin{eqnarray*}
		\|y_m-y_j\|&=&\|y_m-S_mx_{n_k}+S_mx_{n_k}-S_jx_{n_k}+S_jx_{n_k}-y_j\|\\ &\leq& \|y_m-S_mx_{n_k}\|+\|S_mx_{n_k}-S_jx_{n_k}\|+\|S_jx_{n_k}-y_j\|.
	\end{eqnarray*}
	As $m \to \infty$ and $j \to \infty$, both the first and third terms in the last inequality converge to zero in norm. Since $S_m$ is a norm-continuous operator for all natural numbers $m$, we can conclude that:
	$$
	\|S_mx_{n_k}-S_jx_{n_k}\|\leq\|S_m-S_j\| \ \|x_{n_k}\|\leq\|S_m-S_j\|\ M
	$$
	for almost all $m,j$. As $(S_m)$ is a sequence that converges in norm, we have $\|S_m - S_j\| \to 0$ as $m$ and $j$ tend to infinity. Consequently, we obtain $\|y_m - y_j\| \to 0$ in $F$ as $m$ and $j$ approach infinity. Therefore, $(y_m)$ forms a $st$-Cauchy sequence. Since $V$ is complete with respect to the norm, there exists an element $y \in V$ such that $\|y_m - y\| \to 0$ in $F$ as $m$ tends to infinity. Thus, we have:
	\begin{eqnarray*}
		\|Sx_{n_k}-y\|&\leq& \|Sx_{n_k}-S_mx_{n_k}\|+\|S_mx_{n_k}-y_m\|+\|y_m-y\|\\ &\leq& \| S_m-S\|\ \|x_{n_k}\|+\|S_mx_{n_k}-y_m\|+\|y_m-y\|\\ &\leq& \|S_m-S\|\ M+\|S_mx_{n_k}-y_m\|+\|y_m-y\|.
	\end{eqnarray*}
	Fix $m\in N$ and let $k\to\infty$, then 
	$$
	\|Sx_{n_k}-y\|\leq\|S_m-S\| \ M+\|y_m-y\|.
	$$ 
	Since $m \in \mathbb{N}$ is arbitrary, we conclude that $|S(x_{n_k}) - y| \to 0$. Consequently, $S$ is $st$-compact.
\end{proof}

\begin{proposition}\label{leftandrightmultiplication}
	Let $R,T,S$ be linear operator on $U$.
	\begin{enumerate}
		\item[(i)] If $T$ is $st$-compact and $S$ is $st$-continuous, then $S\circ T$ is $st$-compact.
		\item[(ii)] If $T$ is $st$-compact and $R$ is $st$-bounded, then $T\circ R$ is $st$-compact.
	\end{enumerate}
\end{proposition}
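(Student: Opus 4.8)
The plan is to prove both parts by directly composing the three defining properties, reading each operator's behaviour off the appropriate sequence class. The only genuine subtlety is that the definition of $st$-continuity is phrased only at the origin $\theta$, so for part (i) I will first need to upgrade it to an arbitrary limit point using linearity.

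For part (i), I would start with an arbitrary $st$-bounded sequence $(x_n)$ in $U$. Since $T$ is $st$-compact, the image $(T(x_n))$ is $st$-convergent, say $T(x_n)\stc y$ for some $y\in U$. The goal is to show that $((S\circ T)(x_n))=(S(T(x_n)))$ is $st$-convergent. Here I invoke the linearity of $S$: from $T(x_n)\stc y$ I get $T(x_n)-y\stc\theta$, and applying the hypothesis that $S$ is $st$-continuous to the sequence $(T(x_n)-y)$ yields $S(T(x_n)-y)\stc\theta$; since $S$ is linear this reads $S(T(x_n))-S(y)\stc\theta$, i.e. $S(T(x_n))\stc S(y)$. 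Thus $((S\circ T)(x_n))$ is $st$-convergent (to $S(y)$), and as $(x_n)$ was an arbitrary $st$-bounded sequence, $S\circ T$ is $st$-compact.

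For part (ii), the argument is even more direct and needs no limit-shifting. Take any $st$-bounded sequence $(x_n)$ in $U$. Because $R$ is $st$-bounded, the image $(R(x_n))$ is again an $st$-bounded sequence in $U$. Feeding this into the $st$-compact operator $T$, I conclude that $(T(R(x_n)))=((T\circ R)(x_n))$ is $st$-convergent. Hence $T\circ R$ sends $st$-bounded sequences to $st$-convergent sequences, which is exactly $st$-compactness.

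The main (and essentially only) obstacle is the quantifier mismatch in part (i): the definition provided says that $S$ sends sequences $st$-convergent to $\theta$ to sequences $st$-convergent to $\theta$, whereas the composition hands $S$ a sequence $st$-converging to a possibly nonzero $y$. I expect the cleanest resolution to be the linearity reduction described above, namely translating by the limit and applying the definition to the shifted sequence; this requires only the linearity of $S$ and uniqueness of the $st$-limit, both already available. No completeness or Banach-space hypotheses are needed, and no estimates beyond chaining the definitions are required.
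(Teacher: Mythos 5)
Your proof is correct and follows essentially the same route as the paper's: in (i) apply the $st$-compactness of $T$ first and then the $st$-continuity of $S$, and in (ii) apply the $st$-boundedness of $R$ first and then the $st$-compactness of $T$. In fact, your handling of part (i) is slightly more careful than the paper's, which simply asserts $S\big(T(x_n)\big)\stc S(x)$ from $st$-continuity without spelling out the translation-by-the-limit step; your linearity reduction (passing to $T(x_n)-y\stc\theta$ and back) makes that implicit step explicit.
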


\begin{proof}
	$(i)$ Suppose that $(x_n)$ is a $st$-bounded sequence in $U$. Since $T$ is $st$-compact. Then we have $T(x_n)\stc x$ for some $x\in U$. It follows from the $st$-continuity of $S$ that $S\big(T(x_n)\big)\stc S(x)$. Therefore, $S\circ T$ is $st$-compact.
	
	$(ii)$ Assume that $(x_n)$ is a $st$-bounded sequence in $U$. Since $R$ is $st$-bounded, then $R(x_n)$ is $st$-bounded. Now, the $st$-compactness of $T$ implies that $T\big(R(x_n)\big)\stc z$ for some $z\in U$. Therefore, $T\circ R$ is $st$-compact.
\end{proof}

\begin{proposition}\label{rankoneoperator}
	Let $S$ be a $st$-bounded finite rank operator between $U$ and $V$. Then $S$ is $st$-compact.
\end{proposition}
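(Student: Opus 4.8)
The plan is to use two facts in tandem: that a finite rank operator has finite-dimensional range, and that $st$-boundedness controls the size of the image. The guiding idea is that the images $Sx_n$ of an $st$-bounded sequence all lie in the finite-dimensional subspace $W := S(U)$, and on a finite-dimensional space the Heine--Borel property lets us extract norm-convergent subsequences from bounded ones; since norm convergence entails $st$-convergence (exactly as exploited in the proof of Proposition \ref{inclusion}), this delivers the $st$-convergent image required for $st$-compactness.

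First I would fix an arbitrary $st$-bounded sequence $(x_n)$ in $U$. Because $S$ is assumed $st$-bounded, the image $(Sx_n)$ is $st$-bounded in $V$, so there are a constant $M>0$ and an index set $K \subseteq \mathbb{N}$ with $\delta(K)=1$ on which $\|Sx_n\| \le M$ for all $n \in K$; this is the routine passage to a density-one norm-bounded subsequence already used in the proof of Lemma \ref{equality}. Crucially, every $Sx_n$ lies in $W = S(U)$, which is finite-dimensional by the finite rank hypothesis.

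Next I would invoke finite-dimensional compactness. The subfamily $(Sx_n)_{n \in K}$ is a norm-bounded sequence inside the finite-dimensional normed space $W$, so by the Heine--Borel (Bolzano--Weierstrass) property it admits a norm-convergent subsequence $(Sx_{n_j})_j$, with indices $n_j \in K$, converging to some $y \in W \subseteq V$. A norm-convergent sequence is $st$-convergent, so $Sx_{n_j} \stc y$ in $V$. Thus the $st$-bounded sequence $(x_n)$ has a subsequence whose $S$-image is $st$-convergent, which is precisely what is needed to place $S$ in $\mathcal{K}_{st}(U,V)$, in the sense of the relatively-compact-image formulation recalled before the definition of $st$-compactness.

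The step I expect to carry the weight is the coordination of the two extractions: one must draw the Bolzano--Weierstrass subsequence from \emph{within} the density-one set $K$, so that the uniform bound $\|Sx_{n_j}\| \le M$ genuinely applies and finite-dimensional compactness can be used. This is also where finite rank is indispensable, since $st$-boundedness alone yields only a bounded (density-one) image, and in an infinite-dimensional target a bounded sequence may carry no $st$-convergent subsequence; it is finite-dimensionality of the range that upgrades boundedness of the image into the existence of an $st$-convergent subsequence. Finally, I would remark that, as every finite rank operator was already shown to be statistically bounded, the $st$-boundedness hypothesis is in fact automatic, so the statement amounts to saying that every finite rank operator is $st$-compact.
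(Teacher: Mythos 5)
Your proof is correct, and it takes a genuinely different route than the paper's. The paper reduces ``without loss of generality'' to a rank one operator $Sx=f(x)y_0$, notes that $(f(x_n))$ is $st$-bounded in $\mathbb{R}$, extracts a convergent subsequence of reals by Bolzano--Weierstrass, and concludes that $S(x_{k_n})\to\lambda y_0$ in norm; you instead apply Bolzano--Weierstrass directly to the norm-bounded, density-one portion of $(Sx_n)$ inside the finite-dimensional range $W=S(U)$ (which is closed in $V$, so the limit stays in $W$). The two arguments hinge on the same pair of facts --- density-one boundedness plus finite-dimensional compactness, followed by ``norm convergence implies $st$-convergence'' --- but your version handles an arbitrary finite rank operator in one stroke, whereas the paper's reduction to rank one is a real gap for rank $\geq 2$: summing rank one pieces would require nested subsequence extractions, a step the paper never carries out. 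Note also that both you and the paper prove only the subsequence form of the conclusion (some subsequence of $(Sx_n)$ is $st$-convergent); this matches the compactness-style formulation recalled before the definition and the paper's own usage, and indeed under the literal wording of the definition (the whole image sequence must $st$-converge) the proposition would be false --- the identity on $\mathbb{R}$ with $x_n=(-1)^n$ already defeats it --- so your reading is the only tenable one, and you were right to flag it. One caveat on your closing remark: the redundancy of the $st$-boundedness hypothesis holds only under the paper's definition of rank one, which requires $f\in U'$; the example immediately following the proposition applies it to an algebraically rank one operator built from a discontinuous functional, for which $st$-boundedness is a genuine restriction.
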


\begin{proof}
	We suppose that $S$ is given by $Sx=f(x)y_0$ for some $st$-bounded functional $f:U\to\mathbb{R}$ and $y_0\in V$. Let $(x_n)$ be a $st$-bounded sequence in $U$, then $f(x_n)$ is $st$-bounded in $\mathbb{R}$, and so there is a subsequence $x_{k_n}$ such that $f(x_{k_n})\to\lambda$ for some $\lambda\in\mathbb{R}$. Thus we have
	$$
	S(x_{k_n})-\lambda y_0=(f(x_{k_n})-\lambda)y_0\to0
	$$ 
	in $F$, and so we get $S(x_n)\stc 0$. Therefore, $S$ is $st$-compact.
\end{proof}

\begin{example}
	Consider a normed space $U$ and a linear functional $f:U\to\mathbb{R}$ that is not $st$-bounded. In this case, there exists a $st$-bounded sequence $(x_n)$ such that $\lvert f(x_n)\rvert\geq n$ for all $n\in \mathbb{N}$. Consequently, any rank one operator $S:U\to V$ defined by the rule $Sx=f(x)y_0$, where $0\ne y_0\in V$, is not $st$-compact.
\end{example}

\section{Statistical Completeness}\label{Sec:5}
In this section, we show the characterization and properties of statistical Cauchy sequences within normed spaces, aiming to provide clarity and establish a foundational understanding of their significance. While Cauchy sequences have a well-established definition in normed spaces, the concept of statistical Cauchy sequences lacks a universally accepted standard. To address this, we present the most frequently utilized definition for statistical Cauchy sequences.
\begin{definition}
	\cite{fridy}  A sequence $x=(x_n)$ is called \emph{statistical Cauchy sequence} in a normed space $U$ if, for any $\varepsilon>0$, there exists a number $n_\varepsilon\in\mathbb{N}$ such that  
	\begin{equation*}
		\lim\limits_{n\to\infty}\frac{1}{n}|\{k<n:\lVert x_k-x_{n_\varepsilon}\rVert\geq\varepsilon\}|=0,  
	\end{equation*}
	holds.
\end{definition}

It's commonly acknowledged that every Cauchy sequence is a statistical
Cauchy in normed spaces. However, the opposite statement is generally false,
even in Banach spaces. It can be seen from \cite[Theorem 1]{fridy} that a real valued sequence $(x_{n})$ statistically convergent if and only if it is statistical Cauchy. Unfortunately, this fact does not satisfy in normed spaces directly, and so we demonstrate the norm version of the fact piece by piece.
\begin{theorem} \label{st_cauchy covergent}
	Let $U$ be a normed space and $x=(x_{n})$ be a statistical convergent sequence in $U$. Then, $(x_{n})$ is statistical Cauchy sequence.
\end{theorem}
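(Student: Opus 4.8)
The plan is to reduce statistical Cauchyness to statistical convergence through the triangle inequality, exactly as in the classical ``convergent $\Rightarrow$ Cauchy'' argument, with density-zero exceptional sets playing the role that finitely many terms play in the ordinary setting. Suppose $x_{n}\stc x$ for some $x\in U$, and fix an arbitrary $\varepsilon>0$. The first step is to apply the definition of statistical convergence at the level $\varepsilon/2$, which gives
$$
\delta\left(\left\{k\in\mathbb{N}:\|x_{k}-x\|\geq\tfrac{\varepsilon}{2}\right\}\right)=0.
$$
I denote this exceptional set by $A_{\varepsilon}$.

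The second step is to produce a suitable reference index $n_{\varepsilon}$. Since $\delta(A_{\varepsilon})=0$, the complement $\mathbb{N}\setminus A_{\varepsilon}$ has natural density $1$ and is therefore nonempty (indeed infinite); I would choose any $n_{\varepsilon}\in\mathbb{N}\setminus A_{\varepsilon}$, so that $\|x_{n_{\varepsilon}}-x\|<\varepsilon/2$. The third step is the triangle-inequality comparison: for any $k$ with $\|x_{k}-x_{n_{\varepsilon}}\|\geq\varepsilon$, the estimate $\|x_{k}-x_{n_{\varepsilon}}\|\leq\|x_{k}-x\|+\|x-x_{n_{\varepsilon}}\|$ forces $\|x_{k}-x\|\geq\varepsilon/2$, since otherwise the right-hand side would be strictly below $\varepsilon$. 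This yields the set inclusion
$$
\left\{k\in\mathbb{N}:\|x_{k}-x_{n_{\varepsilon}}\|\geq\varepsilon\right\}\subseteq A_{\varepsilon}.
$$
Finally, monotonicity of the natural density gives $\delta(\{k\in\mathbb{N}:\|x_{k}-x_{n_{\varepsilon}}\|\geq\varepsilon\})=0$, which is precisely the defining condition for $(x_{n})$ to be a statistical Cauchy sequence.

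I do not anticipate a genuine obstacle here, as the whole argument is short and elementary. The only point requiring slight care is the selection of $n_{\varepsilon}$: one must observe that a density-zero set cannot exhaust $\mathbb{N}$, so its complement always supplies an admissible reference index with $\|x_{n_{\varepsilon}}-x\|<\varepsilon/2$. The uniqueness of the statistical limit recorded in the preliminaries guarantees that $x$ is well defined, although it is not strictly needed for this direction of the equivalence.
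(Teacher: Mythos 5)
Your proposal is correct and follows essentially the same route as the paper's proof: apply statistical convergence at level $\varepsilon/2$, select a reference index $n_{\varepsilon}$ with $\|x_{n_{\varepsilon}}-x\|<\varepsilon/2$ (possible since a density-zero set cannot exhaust $\mathbb{N}$), and use the triangle inequality to place $\{k:\|x_{k}-x_{n_{\varepsilon}}\|\geq\varepsilon\}$ inside the density-zero exceptional set. Your version is in fact slightly more careful than the paper's, which phrases the final step loosely in ``almost all $n$'' language rather than via the explicit set inclusion and monotonicity of density.
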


\begin{proof}
	Suppose that the sequence $x=(x_{n})$ is statistically convergent to $x$ in $U$. Then, for any $\varepsilon >0$, we have 
	\begin{equation*}
		\lim_{n\rightarrow \infty}\frac{1}{n}|\{k\leq n:\Vert x_{k}-x\Vert \geq\frac{\varepsilon }{2}\}|=0.
	\end{equation*}
	Hence, we can say that $\Vert x_n-x\Vert <\frac{\varepsilon }{2}$ hols for almost all $n$ in  $\mathbb{N}$. So, let us choose $n_\varepsilon\in \mathbbm{N}$ such that $\Vert x_{n_\varepsilon}-x\Vert <\frac{\varepsilon}{2}$ holds. Then, the following inequality
	$$
	\|x_n-x_{n_\varepsilon}\|\leq \|x_n-x\|+\|x_{n_\varepsilon}-x\|<\frac{\varepsilon}{2}+\frac{\varepsilon }{2}=\varepsilon.
	$$
	holds for almost all $n$ in $\mathbb{N}$. Therefore, the sequence $(x_{n})$ is statistically Cauchy in $U$.
\end{proof}

Neverthless, converse of this theorem is not true in any normed space as can
be seen from the example below.
\begin{example}
	Consider $c_{00}$, the space of real sequences that eventually terminate in zeros, is a normed space according to the norm $\left\Vert \left(x_{1},x_{2},\cdots\right) \right\Vert_{c_{00}}=\sup\left\{ \left\vert x_{n}\right\vert :n\in \mathbb{N}\right\}$. Let $(x_{n})$ be a sequence in $c_{00}$ such that $x_{n}=\left(x_{1},x_{2},\cdots,x_{n},0,0, \cdots\right) =\left( 1,\frac{1}{2},...,\frac{1}{n},0,0,\cdots\right)$ for all $n\in \mathbb{N}$. Take any $\varepsilon >0$, then there exist $n_{0}\in \mathbb{N}$ such that $\frac{1}{\varepsilon }<n_{0}$. So, we fix an arbitrary index $n_\varepsilon>n_0$. If we take $n\geq n_{0}$ holds, then we have 
	$$
	\|x_{n}-x_{n_{\varepsilon }}\|_{c_{00}}=\|(0,\cdots,0,\frac{-1}{n+1},\cdots,\frac{-1}{n_{\varepsilon }},0,0,\cdots...)\|_{c_{00}}=\frac{1}{n+1}<\frac{1}{n_{0}}<\varepsilon
	$$
	for the condition $n_\varepsilon>n$, or we have 
	$$
	\|x_{n}-x_{n_{\varepsilon}}\|_{c_{00}}=\|(0,\cdots,0,\frac{1}{n_\varepsilon+1},\cdots,\frac{1}{n},0,0,\cdots...)\|_{c_{00}}=\frac{1}{n_\varepsilon+1}<\frac{1}{n_{0}}<\varepsilon
	$$
	for the case $n>n_\varepsilon$. Hence, we obtain
	\begin{equation*}
		\delta \left( \left\{ k<n:||x_{k}-x_{n_{\varepsilon }}||\geq \varepsilon\right\} \right) \leq \delta \left( \left\{ 1,2,...,n_{0}\right\} \right) =0.
	\end{equation*}
	It follows that $(x_{n})$ is a statistical Cauchy sequence in $c_{00}$. 
	
	Now, choose an arbitrary element $x=\left( x_{1},x_{2},\cdots,x_{k},\cdots\right) \in c_{00}$. Then, there is  $j\in \mathbb{N}$ such that $x_{k}=0$ for every $k\geq j$, and so we have $\left\Vert x_{n}-x\right\Vert _{c_{00}}\geq \frac{1}{j}$ for every $j\leq n\in \mathbb{N}$. Taking $\varepsilon >0$ such that $0<\varepsilon <\frac{1}{j}$, we have 	\begin{equation*}
		\delta\left(\left\{k<n:||x_{k}-x||\geq\varepsilon \right\} \right)=1.
	\end{equation*}
	This means that $(x_{n})$ is not statistical convergent to any element $x=\left( x_{1},x_{2},...\right) \in c_{00}$.
\end{example}

Although being statistical Cauchy is not sufficient for convergence in general normed spaces, the situation is different in Banach spaces as we show in the next theorem. We refer the reader for an exposition of statistical convergence on Bnach spaces to \cite{Kolk}.
\begin{theorem}	\label{cauchy and convergence}
	Let $x=(x_{n})$ be a sequence in a Banach space $U$. Then, $(x_{n})$ is statistical Cauchy iff it is statistical convergent.
\end{theorem}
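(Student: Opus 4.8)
The forward implication---that statistical convergence implies being statistical Cauchy---is precisely Theorem \ref{st_cauchy covergent} and uses no completeness, so the plan is to establish only the converse, where the Banach hypothesis is essential. Assume throughout that $(x_n)$ is statistical Cauchy in $U$.

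The first step is to manufacture a candidate limit out of the witnesses supplied by the Cauchy condition. For each $j\in\NN$, I would apply the definition with $\varepsilon=1/j$ to obtain an index $N_j$ such that the set $A_j:=\{k:\|x_k-x_{N_j}\|\geq 1/j\}$ has natural density zero; equivalently $\|x_k-x_{N_j}\|<1/j$ for almost all $k$. Write $y_j:=x_{N_j}$, so that $(y_j)$ is a sequence of ``central'' terms, one for each scale $1/j$.

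The key step---and the main obstacle---is to show that $(y_j)$ is an \emph{ordinary} Cauchy sequence in $U$. Fix $i,j$. Since $\delta(A_i)=\delta(A_j)=0$, subadditivity (monotonicity) of the natural density gives $\delta(A_i\cup A_j)=0$, so the complement of $A_i\cup A_j$ has density one and is in particular nonempty; choose any index $k\notin A_i\cup A_j$. For this single $k$ we have simultaneously $\|x_k-y_i\|<1/i$ and $\|x_k-y_j\|<1/j$, whence $\|y_i-y_j\|\leq\|y_i-x_k\|+\|x_k-y_j\|<1/i+1/j$. Thus $(y_j)$ is Cauchy in norm, and here the completeness of $U$ enters: there exists $x\in U$ with $\|y_j-x\|\to 0$. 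The crucial trick is that two density-zero sets cannot exhaust $\NN$, so a common index $k$ realizing both approximations always exists.

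Finally I would verify that $x_n\stc x$. Given $\varepsilon>0$, pick $j$ with $1/j<\varepsilon/2$ and, using $y_j\to x$, also $\|y_j-x\|<\varepsilon/2$. For almost all $k$ we have $\|x_k-y_j\|<1/j<\varepsilon/2$, and therefore $\|x_k-x\|\leq\|x_k-y_j\|+\|y_j-x\|<\varepsilon$ for almost all $k$; that is, $\delta(\{k:\|x_k-x\|\geq\varepsilon\})=0$. As $\varepsilon>0$ was arbitrary, $(x_n)$ is statistically convergent to $x$, completing the converse. The sole appeal to completeness is the passage from the Cauchy sequence $(y_j)$ to its limit, which is exactly why the equivalence fails in incomplete spaces such as the $c_{00}$ example preceding this theorem.
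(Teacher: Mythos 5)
Your proof is correct and follows essentially the same route as the paper: extract a norm-Cauchy sequence of witness terms, one per scale $1/j$, invoke completeness of $U$ to obtain a limit $x$, and then transfer statistical convergence of the whole sequence to $x$. If anything, your write-up is tighter than the paper's in two places: your explicit common-index argument (two density-zero sets cannot cover $\mathbb{N}$, so some $k$ realizes both approximations) rigorously justifies the mutual closeness of the witnesses, which the paper asserts only tersely for its recursively chosen subsequence, and your final step --- the inclusion $\{k:\|x_k-x\|\geq\varepsilon\}\subseteq A_j$ with $\delta(A_j)=0$ --- replaces the paper's longer splitting of the exceptional set into the two pieces $K_1$ and $K_2$.
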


While this theorem might seem familiar, we were unable to locate any precise references to support it. Therefore, to ensure clarity, we present a succinct demonstration to aid comprehension.

\begin{proof}
	It is enough to show that every statistical Cauchy sequence in a Banach space $U$ is convergent due to Theorem \ref*{st_cauchy covergent}. Suppose that $(x_n)$ is a statistical Cauchy sequence in a Banach space $U$. Then, for each $\varepsilon>0$, there exists $n_{\varepsilon} \in \mathbb{N}$ as in the definition of statistical Cauchy such that  
	\begin{equation*}
		\lim_{n \to \infty} \frac{1}{n}|\{k<n : \|x_k - x_{n_{\varepsilon}}\| \geq\varepsilon\}|=0,
	\end{equation*}
	holds. Now, we construct a Cauchy subsequence in the usual sense $(x_{n_k})$ of $(x_n)$ as follows: pick $n_1=n_{\varepsilon_1}$ for some $\varepsilon_1>0$, and recursively, choose $n_{k+1}>n_k$ such that $n_{k+1}=n_{\varepsilon_{k+1}}$ for some $\varepsilon_{k+1} < \frac{\varepsilon_k}{2}$. We will show that the subsequence $(x_{n_k})$ of $(x_n)$ is a Cauchy sequence, in the usual sense. For any $\varepsilon > 0$, we can choose $m$ such that $\varepsilon_{m+1} < \frac{\varepsilon}{2}$ holds. Then, for any $p,q > m$, by the choice of $n_p$ and $n_q$, we have  
	\begin{equation*}
		\|x_{n_p}-x_{n_m}\|<\frac{\varepsilon}{2} \ \ \ \text{and} \ \ \ \|x_{n_q}-x_{n_m}\|<\frac{\varepsilon}{2}.  
	\end{equation*}
	Hence, by using the triangle inequality, we get  
	\begin{equation*}
		\|x_{n_p}-x_{n_q}\|\leq\|x_{n_p}-x_{n_m}\|+\|x_{n_q}-x_{n_m}\|< \frac{\varepsilon}{2}+\frac{\varepsilon}{2}=\varepsilon.  
	\end{equation*}
	Therefore, for any $\varepsilon > 0$, we found $n_0=n_m$ such that for all $n_p,n_q>n_0$, $\|x_{n_p} - x_{n_q}\| < \varepsilon$. This demonstrates that $(x_{n_k})$ is a Cauchy sequence in the usual sense in $U$.
	
	Next, we prove that the subsequence $(x_{n_k})$ is statistical convergent. It follows from the completeness of $U$ that the Cauchy subsequence $(x_{n_k})$ converges to an element $x \in U$. We prove that it is also statistically convergent to $x$. Fix any $\varepsilon>0$. Find some $m\in\mathbbm{N}$ such that $\varepsilon_{m+1}<\varepsilon/2$. This is possible due to the construction	of the subsequence. Consider the following set  
	\begin{equation*}
		K:=\{k<n_p:\lVert x_k - x\rVert \geq \varepsilon\}  
	\end{equation*}
	for any $p > n_m$. Split $K$ into two disjoint subsets $K_1$ and $K_2$ respectively as follows: 
	$$
	\{k<n_p:\lVert x_k - x_{n_m}\rVert \geq \varepsilon/2 \ \text{and}\ \lVert x_k - x\rVert \geq \varepsilon\}
	$$
	and 
	$$
	\{k < n_p : \lVert x_k - x_{n_m}\rVert <\varepsilon/2 \ \text{and} \ \lVert x_k-x\rVert \geq \varepsilon\}.
	$$
	Since $p > n_m$, we have $\lVert x_{n_p} - x_{n_m}\rVert < \varepsilon/2$. By the triangle inequality, $\lVert x_k - x_{n_p}\rVert \geq \varepsilon/2$ for any $k \in K_1$. Therefore, $K_1$ is a subset of the set $\{k < n:\lVert x_k - x_{n_m}\rVert \geq \varepsilon/2\}$. From the definition of a statistical Cauchy sequence, we know that the density of this set approaches $0$ as $n\rightarrow \infty$. Hence, the density of $K_1$ also approaches $0$ as $p \rightarrow \infty$. On the other hand, we show that $K_2$ is empty. Assume that there exists an element $k \in K_2$. Then $\lVert x_k-x_{n_m}\rVert < \varepsilon/2$ and $\lVert x_k - x\rVert \geq \varepsilon$.	By the triangle inequality, $\lVert x - x_{n_m}\rVert \geq \lVert x_k-x\rVert - \lVert x_k - x_{n_m}\rVert > \varepsilon/2$, which contradicts with the convergence of $(x_{n_k})$ to $x$. Therefore, $K_2$ must be empty.	Since $K_2$ is empty, $K=K_1$. We have established that the density of $K_1$ approaches $0$ as $p \rightarrow \infty$. This implies that the density of $K$ also approaches $0$ as $p \rightarrow \infty$, satisfying the definition of statistical convergence. As a result, we obtain that the subsequence $(x_{n_k})$ statistically converges to $x$.
\end{proof}

According to Theorem \ref{cauchy and convergence}, the sets of statistical Cauchy sequences and of statistically convergent sequences coincide in classical Banach spaces. Consequently, it seems natural to extent the concept of completeness in the statistical sense.

The normed space $\left( U,\left\Vert .\right\Vert \right) $ is called {\em statistical complete} (or {\em statistical Banach}) space provided that every statistical Cauchy sequence is statistical convergent to an element of $U$.

By considering Theorem \ref{cauchy and convergence}, it is obvious that every Banach space is a statistical Banach space. For converse, we can not find a suitable converse example.
\begin{question}
	Is there an example where a statistical Banach space is present but does not meet the criteria to be classified as a Banach space?
\end{question}

\end{document}